\documentclass[english, 12pt]{amsart}

\usepackage{tikz-cd}
\usepackage{aliascnt}
\usepackage{amsfonts}
\usepackage{mathrsfs}
\usepackage{amsthm}
\usepackage{amsmath,amssymb}
\usepackage[all,poly,knot]{xy}
\usepackage{amsfonts}
\usepackage{color}
\usepackage{hyperref}
\usepackage{hyperref}
\usepackage[hyperpageref]{backref}

\usepackage{comment}
\usepackage{csquotes}
\usepackage{amscd}
\usepackage{mathtools}
\usepackage{dsfont}

\usepackage[charter]{mathdesign}

\DeclareFontFamily{U}{dutchcal}{\skewchar\font=45 }
\DeclareFontShape{U}{dutchcal}{m}{n}{<-> s*[1.0] dutchcal-r}{}
\DeclareFontShape{U}{dutchcal}{b}{n}{<-> s*[1.0] dutchcal-b}{}
\DeclareMathAlphabet{\mathlcal}{U}{dutchcal}{m}{n}
\SetMathAlphabet{\mathlcal}{bold}{U}{dutchcal}{b}{n}

\usepackage{contour}
\usepackage{ulem}
\usepackage{amsmath,calligra,mathrsfs}
\DeclareMathOperator{\sheafhom}{\mathscr{H}\text{\kern -3pt {\calligra\large om}}\,}
\DeclareMathOperator{\sheafend}{\mathscr{E}\text{\kern -3pt {\calligra\large nd}}\,}

\theoremstyle{plain}
\newtheorem{thmx}{Theorem}

\newtheorem{thm}{Theorem}[section]

\newtheorem{lem}[thm]{{Lemma}}

\newtheorem{prop}[thm]{Proposition}

\newtheorem{ques}[thmx]{{Question}}

\newtheorem{defi}[thm]{Definition}

\newtheorem{rmk}[thm]{Remark}
\newtheorem{setup}[thm]{Setup}

\theoremstyle{definition}

\newtheorem{definition-proposition}[subsubsection]{Definition-Proposition}

\newtheorem*{proposition*}{Proposition}

\newtheorem*{conjecture*}{Conjecture}

\newtheorem*{theorem*}{Theorem}

\makeatletter
\newcommand{\namelabel}[1]{%
  \phantomsection
  \renewcommand{\@currentlabel}{#1}
  \label{#1}
}
\makeatother

\newcommand{\thistheoremname}{}
\newtheorem*{genericthm*}{\thistheoremname}
\newenvironment{namedthm*}[1]{\renewcommand{\thistheoremname}{#1}%
	\begin{genericthm*}}
	{\end{genericthm*}}

\theoremstyle{remark}

\numberwithin{equation}{section}

\def\log{\mathrm{log}\,}

\newcommand{\NHC}{\mathrm{NHC}}

\usepackage[top=1in, bottom=1in, left=1in, right=1in]{geometry}

\usepackage{xcolor}
\hypersetup{
	colorlinks,
	linkcolor={red!50!black},
	citecolor={blue!50!black},
	urlcolor={blue!80!black}
}

\contourlength{0.8pt}

\newcommand{\End}{\mathrm{End}}

\begin{document}
\title[Holomorphic isomonodromic deformations of Higgs bundles and related properties]{Holomorphic isomonodromic deformations of Higgs bundles: \\absolute lifting, spectral flatness, and nilpotent rigidity}

\author[Tianzhi Hu]{Tianzhi Hu}
\address{ School of Mathematics and Statistics, Wuhan University, Luojiashan, Wuchang, Wuhan, Hubei, 430072, P.R. China}
\email{hutianzhi@whu.edu.cn}

\author[Mai Shi]{Mai Shi}
\address{ School of Mathematics and Statistics, Wuhan University, Luojiashan, Wuchang, Wuhan, Hubei, 430072, P.R. China}
\email{shimai@whu.edu.cn}

\author[Kang Zuo]{Kang Zuo}
\address{ School of Mathematics and Statistics, Wuhan University, Luojiashan, Wuchang, Wuhan, Hubei, 430072, P.R. China; Institut f\"ur Mathematik, Universit\"at Mainz, Mainz, Germany, 55099}
\email{zuok@uni-mainz.de}
\begin{abstract}
Let \(f:X\to S\) be a smooth projective family, and fix a semisimple flat bundle on a fiber \(X_0\). Its isomonodromic deformation determines a holomorphic section
\(\sigma_{\mathrm{dR}}:S\to M_{\mathrm{dR}}(X/S)\)
of the relative de Rham moduli space. Applying the relative non-abelian Hodge correspondence fiberwise gives a section
\(\sigma_{\mathrm{Dol}}:S\to M_{\mathrm{Dol}}(X/S)\),
whose value at each \(s\in S\) is the Higgs bundle corresponding to the flat bundle \(\sigma_{\mathrm{dR}}(s)\). Unlike \(\sigma_{\mathrm{dR}}\), the section \(\sigma_{\mathrm{Dol}}\) is in general only real analytic.
 We study the geometric consequences of its holomorphicity along a complex analytic subvariety. First, we prove an absolute lifting theorem: the relative isomonodromic Higgs bundle admits an absolute Higgs lift, and the fiberwise harmonic metrics can be modified to solve the Hitchin--Simpson equation on the total space. Second, we prove that the spectral one-form on every resolved irreducible component of the relative spectral scheme is Gauss--Manin flat. As an application, we prove the nilpotent rigidity conjecture of Hu--Sun--Yang--Zuo: if the initial Higgs field is nilpotent, then the Higgs field remains nilpotent along the entire holomorphic isomonodromic locus.
\end{abstract}

\subjclass[2010]{14D22,14C30}
\keywords{}

\maketitle

\setcounter{tocdepth}{1}
\tableofcontents

\section{Introduction}
Let $f:X\longrightarrow S$ be a smooth projective family of smooth
projective varieties over a complex manifold $S$. Let $M_{\mathrm{dR}}(X/S)$ be the relative moduli space of semisimple flat bundles and $M_{\mathrm{Dol}}(X/S)$ be the relative moduli space of polystable Higgs bundles with vanishing Chern classes, both introduced in \cite{SimpsonI,SimpsonII}. Let
\[
\NHC:M_{\mathrm{dR}}(X/S)\xrightarrow{\ \sim\ }M_{\mathrm{Dol}}(X/S)
\]
be non-abelian Hodge correspondence, developed mainly by \cite{Cor,Don,Hitchin,Simpson92,UY}. In \cite{Don,UY}, Donaldson and Uhlenbeck-Yau solve the Hermitian--Yang--Mills equation and establish this correspondence for unitary flat bundles. In \cite{Cor,Hitchin,Simpson92}, Corlette, Hitchin and Simpson solve the harmonic metric equations, equivalently the Hitchin--Simpson equations, and establish this correspondence for any semisimple flat bundle.
 
 We aim to use the non-abelian Hodge theory to study the isomonodromic deformation of a local system on a projective manifold. Hence we make the following setup.
\begin{setup}\label{setup:isomonodromic}
 Fix a point
$0\in S$ and a semisimple flat bundle $(V_0,\nabla_0)$ on $X_0$. Its
isomonodromic deformation defines a (multi-valued) holomorphic section
\[
\sigma_{\mathrm{dR}}:S\to M_{\mathrm{dR}}(X/S)\quad\text{with}\quad \sigma_{\mathrm{dR}}(0)=[(V_0,\nabla_0)]\in M_{\mathrm{dR}}(X_0).
\]
 Then we define the real analytic
section
\[
\sigma_{\mathrm{Dol}}:=\NHC\circ\sigma_{\mathrm{dR}}:
S\to M_{\mathrm{Dol}}(X/S),
\]
which is called the \textbf{isomonodromic deformation of a Higgs bundle}.
\end{setup}

Unlike the de Rham section \(\sigma_{\mathrm{dR}}\), the section \(\sigma_{\mathrm{Dol}}\) is in general not holomorphic. Biswas--Heller--Heller \cite{BHH}, building on the work of To\v{s}i\'c \cite{Tosic}, first observed this phenomenon. More recently, Hu--Sun--Zuo \cite{HSZ} introduced a non-abelian Kodaira--Spencer map that measures the failure of holomorphicity.

The holomorphicity of \(\sigma_{\mathrm{Dol}}\) is closely related to special geometric properties of the isomonodromic deformation. For the universal family over Teichm\"uller space, To\v{s}i\'c \cite{Tosic} and Collier--Toulisse--Wentworth \cite{CTW} related holomorphicity to the constancy of the associated energy functional, while non-holomorphicity is reflected in its strict plurisubharmonicity.

In another direction, Hu--Sun--Yang--Zuo proved that, when the initial flat bundle underlies a polarized complex variation of Hodge structures, the non-abelian Noether--Lefschetz locus is precisely the maximal complex analytic locus on which \(\sigma_{\mathrm{Dol}}\) becomes holomorphic \cite[Theorem~1.7]{HSYZ}.

The purpose of this paper is to study further geometric consequences of the holomorphicity of \(\sigma_{\mathrm{Dol}}\).

\subsection{Holomorphicity and absolute lifting}

We first consider an absolute lifting problem. In the notation of Setup~\ref{setup:isomonodromic}, let \(U\subset S\) be a complex analytic subvariety through \(0\) such that the restricted section $
\sigma_{\mathrm{Dol}}|_U
$ is holomorphic, and write \(X_U=X\times_S U\). The restriction of \(\sigma_{\mathrm{Dol}}\) determines a holomorphic family of relative Higgs bundles on \(X_U/U\). It is natural to ask whether this relative Higgs field is induced by a Higgs field on the total space \(X_U\), or equivalently, whether it admits an \textbf{absolute lift}. 

There is a global analogue of this problem. For an irreducible representation with finite mapping class group orbit (MCG-finite for short), Landesman--Litt show that, after passing to a suitable finite \'etale cover, the representation extends from one fiber to a local system on the total family \cite[Corollary~2.3.5]{LL}. When the total space is smooth quasi-projective, Mochizuki's existence theorem then produces the corresponding absolute harmonic bundle. Thus, MCG-finiteness provides a global mechanism for absolute lifting. For further results on MCG-finite representations and related non-abelian fixed-part phenomena, see \cite{LL,EK} and the references therein.

From this perspective, the holomorphicity of \(\sigma_{\mathrm{Dol}}\) may be viewed as a local Dolbeault analogue of the MCG-finite condition. Our first main theorem establishes the corresponding local lifting property.

\begin{thmx}[=Theorem~\ref{thm:absolute-lifting}]\label{thm_A}
Let \(U\) be a contractible Stein manifold and assume that
\(\sigma_{\mathrm{Dol}}|_U\) is holomorphic. Then the relative isomonodromic Higgs bundle on \(X_U/U\) admits an absolute Higgs lift on \(X_U\). Moreover, the fiberwise harmonic metrics can be chosen so that the induced metric and the absolute Higgs field solve the Hitchin--Simpson equation on the total space \(X_U\).
\end{thmx}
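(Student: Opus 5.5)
We start from the isomonodromic flat bundle on the total space. Since \(U\) is contractible, the monodromy of \(\sigma_{\mathrm{dR}}|_U\) is single-valued. The Gauss--Manin (isomonodromic) connection then extends the fiberwise flat bundles \((V_s,\nabla_s)\) to a flat bundle \((\mathbb V,\mathbb D)\) on \(X_U\) whose restriction to each fiber \(X_s\) is \(\sigma_{\mathrm{dR}}(s)\); concretely, \(\pi_1(X_U)\cong\pi_1(X_0)\), so the representation \(\rho_0\) extends. Choosing fiberwise harmonic metrics \(h_s\) (unique up to the automorphisms of the polystable object), the relative non-abelian Hodge correspondence gives the decomposition
\[
\mathbb D|_{X_s}=\partial_{h_s}+\bar\partial_{s}+\theta_s+\theta_s^{*h_s}.
\]
By real analytic dependence of harmonic metrics on parameters, after a normalization these assemble into a smooth metric \(h\) on \(\mathbb V\). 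We then decompose the full connection \(\mathbb D=D_h+\Psi_h\) into its \(h\)-unitary part and self-adjoint part. We split \(\Psi_h\) into types \((1,0)\) and \((0,1)\), writing \(\Psi_h=\Theta+\Theta^{*}\), and set \(\bar\partial_{\mathbb E}:=D_h^{0,1}\). The candidate absolute Higgs bundle is \((\mathbb E,\bar\partial_{\mathbb E},\Theta)\). Its fiberwise restriction is \((E_s,\theta_s)\), so it is a lift of the relative Higgs bundle, provided it is actually Higgs.

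The main task is to prove \(\bar\partial_{\mathbb E}^2=0\), \(\bar\partial_{\mathbb E}\Theta=0\) and \(\Theta\wedge\Theta=0\) on \(X_U\), that is, pluriharmonicity of \(h\) in the horizontal directions. Here we use the holomorphicity of \(\sigma_{\mathrm{Dol}}|_U\). The non-abelian Kodaira--Spencer map of \cite{HSZ}, which measures the \(\bar\partial_U\)-derivative of \(\sigma_{\mathrm{Dol}}\), vanishes. Unwinding it, the horizontal \((0,1)\) variation of \((\bar\partial_s,\theta_s)\) is a gauge-trivial class: it is exact in the Dolbeault deformation complex \(C^\bullet(\mathrm{End}E_s)\) with differential \(\bar\partial+[\theta,\cdot]\). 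Thus for each \(\bar v\in T^{0,1}U\) there is an endomorphism \(\xi_{\bar v}(s)\) that solves this exactness equation. Modifying the metric by a horizontal gauge (equivalently, changing the \(C^\infty\) splitting of \(T X_U\to TU\) used to define the horizontal part of \(\mathbb D\)) by these \(\xi\), we arrange that the mixed components \(F_{\mathrm{mix}}\) of the pseudo-curvature \(G_h=(\bar\partial_{\mathbb E}+\Theta)^2\) vanish. Harmonicity on fibers already kills the purely vertical components. The purely horizontal components are then handled by the \(\bar\partial\)-Bianchi identity: restricted to each fiber they are \((\bar\partial_s+\theta_s)\)-closed sections of \(\mathrm{End}E_s\) of degree \(0\). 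Such sections are parallel, hence come from the Lie algebra of automorphisms, and they can be killed by a further holomorphic-in-\(U\) gauge. We solve the resulting \(\bar\partial\)-problem on \(U\) using that \(U\) is Stein and contractible, via Cartan's Theorem B and the triviality of the relevant cohomology. This rigidification step is where the assumptions on \(U\) enter.

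Finally, once \(G_h=0\), the metric \(h\) is pluriharmonic for \(\mathbb D\), so the Hitchin--Simpson equation \(F_{D_h}+[\Theta,\Theta^*]=0\) holds, and \((\mathbb E,\Theta,h)\) is an absolute harmonic bundle. I expect the main obstacle to be the second step. One must show that the gauge corrections \(\xi\) can be chosen smoothly and with the correct reality condition, namely self-adjoint, so that they are genuinely metric changes and not complex gauge transformations. One must also show that the mixed curvature term vanishes outright rather than only in cohomology. Here I would try Simpson's Kähler identities on each fiber. Applying \(\Delta\)-harmonic projection to the mixed term, the holomorphicity hypothesis says its harmonic part vanishes, and the exact part is absorbed by a self-adjoint \(\xi\) obtained by inverting the Laplacian on the orthogonal complement of the automorphisms. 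The \(\xi\) depend real analytically on \(s\) because the fiberwise Laplacians have constant-dimensional kernels. We then normalize so that \(\mathrm{tr}\,\xi=0\) on the stabilizer.
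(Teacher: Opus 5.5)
There is a genuine gap in the step where you kill the mixed components of $G_h$. Once $\mathbb D$ is fixed, the objects $D_h$, $\Psi_h$, $\Theta=\Psi_h^{1,0}$, $\bar\partial_{\mathbb E}=D_h^{0,1}$, and hence $G_h=(\bar\partial_{\mathbb E}+\Theta)^2$, are determined by $h$ alone. Changing the $C^\infty$ splitting of $TX_U\to TU$ only rewrites the same tensor in different components, so it cannot make that tensor vanish. Moreover, $h$ has almost no freedom. Its restriction to each $X_s$ must be harmonic; otherwise $(\bar\partial_{\mathbb E},\Theta)|_{X_s}$ is no longer $(E_s,\theta_s)$ and the vertical components of $G_h$ reappear. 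In the stable case the harmonic metric is unique up to a positive constant. So replacing $h$ by $e^{\xi}h$ with a non-scalar self-adjoint $\xi$ is not allowed. Replacing it by $e^{\varphi}h$ with $\varphi$ a function on $U$ only shifts $\Theta$ by $-\frac12 p^*\partial\varphi\,\mathrm{id}$, which cannot affect the traceless mixed term. Thus ``absorbing the exact part'' is not available. The cohomological information $\theta_{0,*}([\eta_v])=0$ only says the mixed term is $(\bar\partial+\operatorname{ad}\theta)$-exact, and that is not enough.

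The missing ingredient is the first-order normalization of Hu--Sun--Zuo (Theorem~\ref{thm:first-order-normalization}). Let $g_v$ be the first variation of $H_0^{-1}H$ in a flat frame, so that $\Theta(V)=-\frac12 g_v$ for a horizontal lift $V$ of $v$. For the actual harmonic metric, holomorphicity of $\sigma_{\mathrm{Dol}}$ forces $[\theta_0,g_v]=0$ and $\bar\partial_0g_v=2\eta_v\lrcorner\theta_0$ on the nose. In other words, $\frac12 g_v$ is itself a primitive of the obstruction. This is analytic input from the harmonic metric equation, not a formal consequence of the class vanishing.

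With it, the mixed components of $\bar\partial_{\mathbb E}\Theta$ and of $\Theta\wedge\Theta$ vanish identically with no correction. The other mixed type follows from the skew-adjointness given by the $(1,1)$-part of $D_h\Psi_h=0$. The horizontal $(2,0)$-part $\frac14[g_v,g_w]$ of $\Theta\wedge\Theta$ vanishes because it is a traceless holomorphic Higgs endomorphism of a stable bundle; no gauge change could remove it otherwise.

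Your treatment of the horizontal $(1,1)$-part is close to the paper's, but it must be a real scalar change of metric. A ``holomorphic-in-$U$ gauge'' does nothing, since $G_h$ is gauge covariant. That component equals $p^*\beta\,\mathrm{id}$, where $\beta$ is $d$-closed (from $\operatorname{tr}\bar\partial_{\mathbb E}\Theta=-\frac12\bar\partial\partial\log\det H$) and purely imaginary. On contractible Stein $U$ one then writes $\beta=\frac12\bar\partial_U\partial_U\varphi$ with $\varphi$ real and replaces $h$ by $e^{\varphi}h$; this is a $\partial\bar\partial$-problem, not a bare $\bar\partial$-problem.

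Finally, all these ``scalar'' arguments need stability. The polystable case must first be reduced to stable factors as in Proposition~\ref{prop:app-polystable-reduction}, since otherwise the fiberwise Higgs endomorphisms are not just scalars.
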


Thus, holomorphicity does more than make the Dolbeault family holomorphic in the parameter: it allows one to solve the absolute Hitchin--Simpson equation also in the horizontal directions. There is also a finite-order analogue. Over the Artin thickenings considered in Section~\ref{subsec:deformation-theory}, finite-order holomorphicity of the isomonodromic Dolbeault deformation yields an absolute integrable Higgs lift; see Theorem~\ref{thm:truncated-absolute-lifting}.

Our proof of Theorem~\ref{thm_A} mainly builds on the first-order deformation approach developed in \cite{HSZ,HSYZ} and the existence of the fiberwise harmonic metric. By suitably modifying the fiberwise harmonic metric, we construct the absolute lift explicitly and obtain a global Higgs bundle on the total space.

\subsection{Flatness of the spectral one-form}

Theorem~\ref{thm_A} suggests a second question.  The eigenvalues of a Higgs field are encoded by its spectral one-form on its spectral scheme.  If the relative Higgs field comes from an absolute Higgs field on the total space, one expects this spectral one-form to behave like the restriction of a closed absolute one-form.  This leads naturally to the question whether its fiberwise cohomology class is flat under the Gauss--Manin connection (defined in detail in Definition~\ref{def_relative_s_oneform}). Our second main result gives an affirmative answer to this question.
\begin{thmx}[=Theorem~\ref{thm:spectral-flatness}]\label{thm_B}
Let \(U\subset S\) be an irreducible complex analytic subvariety such that
\(\sigma_{\mathrm{Dol}}|_U\) is holomorphic.  Then the relative spectral one-form of the isomonodromic Higgs family is flat.  More precisely, after resolving each irreducible component of the reduced relative spectral scheme, the cohomology class of the induced spectral one-form is Gauss--Manin flat.\end{thmx}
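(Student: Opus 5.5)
The plan is to reduce Theorem~\ref{thm_B} to Theorem~\ref{thm_A}, together with a local-to-global argument on \(U\). Flatness of a section of the Gauss--Manin local system is a local property on the smooth locus of \(U\). It also propagates by analytic continuation, because \(U\) is irreducible and the relevant cohomology classes vary holomorphically. So we may replace \(U\) by a small contractible Stein open subset of its smooth locus, and Theorem~\ref{thm_A} applies there. It gives an absolute Higgs field \(\Theta\) on \(X_U\) whose relative part is the isomonodromic relative Higgs field \(\theta\). It also gives a metric \(h\) solving the Hitchin--Simpson equation on \(X_U\). In particular \(\Theta\wedge\Theta=0\) and \(\bar\partial\Theta=0\) on the total space.

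Next, pass to spectral data. Let \(Z\subset T^*_{X_U/U}\) be the relative spectral scheme of \(\theta\), with reduced irreducible component \(Z_i\), resolution \(\widetilde Z_i\to Z_i\), and spectral one-form \(\lambda_i\): the pullback of the tautological relative one-form. Because \(\Theta\) is integrable, its characteristic polynomial gives an absolute spectral variety \(\Sigma\subset T^*X_U\), and the projection \(T^*X_U\to T^*_{X_U/U}\) maps \(\Sigma\) onto \(Z\). The aim is to show that, after possibly shrinking \(U\) further, each \(Z_i\) lifts to a component \(\Sigma_i\) of \(\Sigma\). The pullback of the absolute tautological one-form to the resolution \(\widetilde\Sigma_i\) is then a global holomorphic one-form \(\Lambda_i\) whose restriction to the fibers over \(U\) is \(\lambda_i\).

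The key claim is that \(\Lambda_i\) is closed. Holomorphic one-forms on compact Kähler manifolds are closed, but the total space here is not compact. Closedness has to come from commutativity: \(\Theta\wedge\Theta=0\) means that the eigen-one-forms of \(\Theta\) form a Lagrangian-type, i.e.\ integrable, subvariety. On the generic locus where \(\Theta\) is locally diagonalizable with eigenforms \(\alpha_j\), the equations \(\bar\partial\Theta=0\) and \(\Theta\wedge\Theta=0\), together with the harmonic metric condition \((\partial_h+\bar\partial+\Theta+\Theta^{*_h})^2=0\), should give \(d\alpha_j=0\). The mechanism is the following. The component of the flatness equation along the eigenline bundles, combined with \(\partial\Theta=0\), yields \(\partial\alpha_j=0\) modulo terms that vanish on the eigenline. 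Since \(\alpha_j\) is holomorphic, this gives \(d\alpha_j=0\). This fiberwise-to-absolute closedness argument is the main obstacle, and I would first try proving it on the generic semisimple locus and extending across the complement by normality of the resolution. As a fallback, I would use the fiberwise compactness: restricted to each fiber \(\widetilde Z_{i,s}\), which is compact Kähler, \(\lambda_i\) is closed, so only the mixed horizontal-vertical part of \(d\Lambda_i\) needs control.

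Once \(\Lambda_i\) is a closed one-form on \(\widetilde\Sigma_i\), whose fibers over \(U\) are \(\widetilde Z_{i,s}\), its class defines a section of \(R^1\pi_*\mathbb C\) extending \(s\mapsto[\lambda_{i,s}]\). On a smooth family over a contractible base, a class coming from a global closed form on the total space is Gauss--Manin flat, by the Cartan-type formula \(\nabla^{GM}_v[\lambda_s]=[\iota_{\tilde v}d\Lambda_i]=0\). If the family \(\widetilde Z_i\to U\) is not smooth, I would restrict to the open dense locus of \(U\) where it is smooth, and conclude flatness everywhere by analytic continuation on the irreducible \(U\).
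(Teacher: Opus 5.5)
Your architecture is viable, and it is the alternative the paper itself sketches in the remark after Theorem~\ref{thm:spectral-flatness}: an absolute lift over a small ball via Theorem~\ref{thm:absolute-lifting}, closedness of the absolute spectral one-form, then Cartan's formula. The gap is the step you flag as the main obstacle. Closedness of $\Lambda_i$ is not proved, and the mechanism you propose cannot work in general. You argue on ``the generic locus where $\Theta$ is locally diagonalizable'' using eigenline bundles. But a Higgs field need not be semisimple on any open set: a nonzero nilpotent Higgs field, exactly the case needed for Theorem~\ref{thm:nilpotence-rigidity}, is diagonalizable only on its zero locus. So this locus can fail to be dense, and there is nothing to extend by normality. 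An eigenvector argument also does not survive the non-semisimple case. From $\Theta s=\alpha_j s$ and $D_h\Theta=0$ one gets $d\alpha_j\,s=-(\Theta-\alpha_j)\wedge D_hs$. The natural way to kill the right-hand side is to pair with a left eigenvector, and for a single Jordan block that annihilates $s$. The fibrewise-compactness fallback only reduces the problem to the mixed components without controlling them.

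The missing idea is to replace eigenlines by joint \emph{generalized} eigenspaces. On a dense open subset of the resolved component, the generalized eigenspace $W\subset\widehat p^{*}E$ attached to the point of the spectral cover is a holomorphic subbundle. It has a holomorphic projector $P$ along the sum $W'$ of the other blocks, and $\Theta|_W=\Lambda\,\mathrm{id}_W+N$ with $N$ nilpotent. Hence $\Lambda=\frac1m\operatorname{tr}(P\Theta)$ with $m=\operatorname{rank}W$, and
\[
d\Lambda=\tfrac1m\operatorname{tr}\bigl(D_hP\wedge\Theta\bigr)+\tfrac1m\operatorname{tr}\bigl(P\,D_h\Theta\bigr)=0 .
\]
The second term vanishes because $D_h\Theta=0$; this is the $(2,0)$-part of flatness of $\nabla=D_h+\Theta+\Theta^{\star_h}$, recorded in the proof of Theorem~\ref{thm:absolute-lifting}. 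The first term vanishes because differentiating $P^2=P$ makes $D_hP$ off-diagonal, while $\Theta$ is block-diagonal for $W\oplus W'$. Continuity then gives $d\Lambda_i=0$ everywhere.

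This is exactly the computation behind the paper's proof, but the paper never passes through Theorem~\ref{thm:absolute-lifting}. It applies the projector-trace formula on the fibre $\widehat\Sigma_0$ and differentiates in $t$, using only the first-order normal form $\dot{\widehat\theta}=-\frac12D_{\widehat h_0}\widehat g_1$ and $[\theta_0,g_1]=0$. The variation of the spectral form is then the exact form $-\frac1{2m}\,d\operatorname{tr}(P\widehat g_1)$. This avoids the Stein/contractible reduction, the metric renormalization and the absolute spectral variety.

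One further point in your route needs a sentence. A component $\Sigma_i$ of the absolute spectral variety lying over $Z_i$ may have degree $>1$ over it, since horizontal eigenvalues on one relative generalized eigenspace can be permuted. Its fibres are then covers of $\widetilde Z_{i,s}$ rather than $\widetilde Z_{i,s}$ itself. This is harmless, because pullback on $H^1$ along a generically finite surjection is injective and commutes with $\nabla^{\mathrm{GM}}$.
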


We prove Theorem~\ref{thm_B} by combining the first-order deformation approach developed in \cite{HSZ,HSYZ} with the observation that the spectral one-form can be expressed as the trace of the Higgs field on the corresponding generalized eigenspace. The use of generalized eigenspaces of a Higgs field over a spectral covering already appears in Zuo's work on the factorization of non-rigid representations \cite{Zuo94}; see also \cite[\S4.3]{ZuoBook} for a detailed treatment.

\subsection{Nilpotent rigidity}
Hu--Sun--Yang--Zuo formulated the following rigidity conjecture: if the initial Higgs field is nilpotent and \(\sigma_{\mathrm{Dol}}\) is holomorphic along a complex analytic subvariety through the initial point, then the entire restricted family should remain nilpotent \cite[Conjecture~1.9]{HSYZ}. Using their higher-order deformation theory, they proved this when the initial Higgs bundle is generically regular nilpotent \cite[Theorem~1.10]{HSYZ}, and conjectured that the generic regularity assumption should be unnecessary.

Using Theorem~\ref{thm_B}, we give a complete proof of the above nilpotent rigidity conjecture.

\begin{thmx}[=Theorem~\ref{thm:nilpotence-rigidity}]\label{thm_C}
Let \(U\subset S\) be an irreducible complex analytic subvariety through \(0\), and assume that
\(\sigma_{\mathrm{Dol}}|_U\) is holomorphic. If the initial Higgs field \(\theta_0\) is nilpotent, then every Higgs bundle represented by \(\sigma_{\mathrm{Dol}}(u)\), \(u\in U\), is nilpotent.
\end{thmx}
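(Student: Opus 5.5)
We deduce Theorem~\ref{thm_C} from Theorem~\ref{thm_B}. The idea is that nilpotency of a Higgs field is the vanishing of its spectral data. We will show that the spectral one-form must vanish identically on every component of the relative spectral scheme over \(U\), because it is Gauss--Manin flat and vanishes on the fiber over \(0\).

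\textbf{Step 1: setup.} Let \((E_u,\theta_u)\) represent \(\sigma_{\mathrm{Dol}}(u)\). The characteristic coefficients \(\mathrm{tr}(\wedge^k\theta_u)\in H^0(X_u,\mathrm{Sym}^k\Omega^1_{X_u})\) vary holomorphically in \(u\in U\), since \(\sigma_{\mathrm{Dol}}|_U\) is holomorphic. They cut out the relative spectral scheme \(Z_U\subset T^*(X_U/U)\). The Higgs bundle \(\theta_u\) is nilpotent if and only if the reduced fiber \((Z_U)_u\) equals the zero section. Equivalently, the tautological one-form \(\lambda\) restricts to zero on \((Z_U)_u\).

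\textbf{Step 2: apply Theorem~\ref{thm_B}.} Take an irreducible component \(Z'\) of \((Z_U)_{\mathrm{red}}\) that dominates \(U\) (non-dominating components live over proper subvarieties and are handled by the same argument on those subvarieties or by closedness), and resolve it to \(\widetilde Z'\to U\). Over a dense Zariski-open \(U^\circ\subset U\) the map \(\widetilde Z'\to U\) is smooth projective. Theorem~\ref{thm_B} says the class \([\lambda|_{\widetilde Z'_u}]\in H^1(\widetilde Z'_u,\mathbb C)\) is Gauss--Manin flat. Since \(\lambda\) is a holomorphic one-form, its class lies in \(H^0(\Omega^1)\subset H^1\), so the class determines the form. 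It therefore suffices to show that the flat section vanishes at a point near \(0\).

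\textbf{Step 3 (main obstacle): the special fiber.} The point \(0\) may be outside \(U^\circ\), and the resolution may degenerate there. I would use the specialization of \(\widetilde Z'\) at \(0\). Its image in \(T^*X_0\) lies in the spectral scheme of \(\theta_0\), which is the zero section, so \(\lambda\) pulls back to zero on every component of the fiber over \(0\).

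To pass from this to the nearby fibers, I would use local invariant cycles. Restrict to a disc \(\Delta\subset U\) through \(0\) meeting \(U^\circ\). A flat section over \(\Delta^*\) is a monodromy-invariant class. By Deligne's invariant cycle theorem (after a semistable reduction and further resolution if needed), it comes from \(H^1\) of the total space \(\widetilde Z'_\Delta\). Its restriction to the central fiber is the class of \(\lambda\) there, which is zero.

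The key check is that \(\lambda\) extends over the whole family as an honest relative holomorphic one-form. Its \((1,0)\)-class on the central fiber then vanishes, and the map \(H^1(\widetilde Z'_\Delta)\to H^1(\widetilde Z'_0)\) is injective on the weight-1 / \(F^1\) part coming from the nearby fiber's invariant part. I expect this injectivity, together with the compatibility of Hodge filtrations, to be the delicate point. A more robust fallback is to use periods. Integrate \(\lambda\) over a flat family of cycles \(\gamma_u\); the integral is constant by flatness. As \(u\to 0\), the cycles \(\gamma_u\) map to \(T^*X_0\) near the zero section and \(\lambda\to 0\) uniformly on compact sets, so the limit is \(0\). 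Hence all periods vanish, so \(\lambda|_{\widetilde Z'_u}=0\) for \(u\in U^\circ\).

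\textbf{Step 4: conclude.} Suppose \(\lambda|_{\widetilde Z'_u}=0\). Since \(\widetilde Z'_u\to Z'_u\subset T^*X_u\) is generically finite over its image, and the tautological form on a subvariety of \(T^*X_u\) that projects finitely to \(X_u\) vanishes only on the zero section, \(Z'_u\) lies in the zero section. Hence all characteristic coefficients of \(\theta_u\) vanish for \(u\in U^\circ\). By holomorphicity and irreducibility of \(U\), they vanish on all of \(U\), so every \(\theta_u\) is nilpotent.
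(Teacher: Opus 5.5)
Your overall strategy matches the paper's, and Steps 1, 2 and 4 are fine. The strategy is: Theorem~\ref{thm_B} gives Gauss--Manin flatness on resolved spectral components; nilpotence of \(\theta_0\) kills the form on the central fiber; you specialize along a disc \(\Delta\) with \(\Delta^*\subset U^\circ\); and the graph argument turns \(\alpha_t=0\) back into nilpotence. Discarding non-dominating components by closedness, and concluding via the identity principle instead of the paper's open--closed argument on the nilpotent locus, both work. The gap is Step~3. It carries the real content of the paper's Lemma~\ref{lem:hodge-norm-specialization}, and neither of your two routes establishes it. In the invariant-cycle route, a preimage of the invariant class \([\alpha_t]\) in \(H^1(\widetilde Z'_\Delta)\cong H^1(\widetilde Z'_0)\) is not unique. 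Nor is it a priori the class of \(\lambda|_{\widetilde Z'_0}\), since no closed extension of \(\lambda\) to the total space is available. So ``its restriction to the central fiber is the class of \(\lambda\) there'' is unjustified. Making this route work needs limit mixed Hodge theory: the limit of \([\alpha_t]\) lies in \(F^1_{\lim}\cap\ker N\subset F^1\cap W_1\). Its \(\mathrm{Gr}^W_1\)-image must then be identified with the restriction of the extended relative form to the components of a semistable model, and one uses \(F^1\cap W_0=0\). You explicitly leave this open.

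Your period fallback also does not close the gap. Uniform convergence of \(\lambda\) to \(0\) near the central fiber only gives \(\bigl|\int_{\gamma_u}\lambda\bigr|\le\sup_{\widetilde Z'_u}|\lambda|\cdot\mathrm{length}(\gamma_u)\), and nothing bounds the length (mass) of the transported cycles. The horizontal lift used to transport them blows up where \(d\pi\to0\), that is, near singular or non-reduced points of the central fiber. There is no a priori reason the decay of \(\sup|\lambda|\) beats the growth of \(\mathrm{length}(\gamma_u)\). Bounded-mass representatives could probably be produced via semistable reduction and a Clemens-type controlled transport, but that argument is missing.

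The paper avoids both problems by integrating over whole fibers instead of \(1\)-cycles. It works on a smooth total space that is flat over \(\Delta\): a resolution of a spectral component of the family pulled back to the disc. There it takes a closed, relatively K\"ahler form \(\omega\) and a smooth absolute extension \(A\) of \(\alpha\), and sets \(I(t)=\sqrt{-1}\int_{\widetilde Z'_t}\alpha_t\wedge\overline{\alpha_t}\wedge\omega_t^{d-1}\).
\begin{itemize}
\item \(I(t)\) is locally constant on \(\Delta^*\), because \([\alpha_t]\), \([\overline{\alpha_t}]\) and \([\omega_t]\) are all flat.
\item The fiber currents have mass \(\int\omega_t^d\), which is constant, and they converge to \([\widetilde Z'_0]\) by Barlet. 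Hence \(I(t)\to\int_{[\widetilde Z'_0]}\sqrt{-1}A\wedge\overline A\wedge\omega^{d-1}=0\).
\item K\"ahler positivity then forces \(\alpha_t=0\).
\end{itemize}
Substituting this argument for your Step~3 completes your proof.
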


This still leaves the following natural characterization problem for the isomonodromic deformation of a nilpotent Higgs bundle.

\begin{ques}
How can one characterize the locus on which \(\sigma_{\mathrm{Dol}}\) remains nilpotent, assuming that the initial Higgs bundle is nilpotent?
\end{ques}

We remark that this nilpotent locus appears to be real analytic in general.

\vspace{.5cm}

\noindent\textbf{AI Declaration:} The authors proposed the research questions and ideas of this paper. AI tools were used for calculations, proving intermediate results, and polishing the writing. All
mathematical statements have been independently checked by the authors, who take full
responsibility for the content.

\section{Isomonodromic deformation theory and absolute lifting}
\label{sec:deformation-lifting}
We work throughout with the isomonodromic Dolbeault section introduced in
Setup~\ref{setup:isomonodromic}.  We first recall its first-order deformation
theory and higher order deformation theory established in \cite[Theorem~A and Proposition~4.2]{HSZ} and \cite[Section~3.3 and Theorem~5.1]{HSYZ}. We then
use these results to prove the absolute lifting Theorem~\ref{thm:absolute-lifting} on a contractible complex analytic base and the absolute lifting Theorem~\ref{thm:truncated-absolute-lifting} on an Artin base. 

\subsection{First-order and higher order isomonodromic deformation theory}
\label{subsec:deformation-theory}
In the notation of Setup~\ref{setup:isomonodromic}, fix $0\in S$ and let
$X_0$ be the corresponding fiber.  For an
integer $n\ge1$, let $A_n:=\mathbb C[t]/(t^{n+1})$ be an Artin ring 
and 
\[
\gamma:\operatorname{Spec}A_n\longrightarrow S,
\qquad \gamma(0)=0,
\]
be an $n$-th order germ of $S$ at $0\in S$.  Write
$(E_0,\theta_0):=\sigma_{\mathrm{Dol}}(0),$
and let $h_0$ be a harmonic metric on $(E_0,\theta_0)$.  We denote by
$D_{h_0}=D_{h_0}^{1,0}+\bar\partial_0$ the corresponding Chern
connection. We aim to study the $n$-th truncation of this section along $\gamma$.

On the fixed differentiable manifold $X_0$, pulling back the family $X/S$ by $\gamma$ induces a $n$-th order deformation of
complex structure of $X_0$, denoted by $X_n\to\operatorname{Spec}A_n$. This higher order deformation is represented by a truncated Maurer--Cartan element
\[
\eta=\sum_{k=1}^nt^k\eta_k
\in\mathcal A^{0,1}(T_{X_0})\otimes(t),
\qquad
\bar\partial\eta+\frac12[\eta,\eta]=0.
\]
We write $P'_{\eta}$ and $P''_{\eta}$ for the relative type projections
associated with this deformation. In particular, for $\alpha\in\Omega^{1,0}(X_0)$ and
$\bar\alpha\in\Omega^{0,1}(X_0)$, one has
\[
P'_{\eta}(\alpha)
=
\alpha-\sum_{i=1}^n t^i\eta_i\lrcorner\alpha,
\qquad
P''_{\eta}(\bar\alpha)
=
\bar\alpha-\sum_{i=1}^n \bar t^{\,i}\bar\eta_i\lrcorner\bar\alpha.
\]

Let $h_t$ be the $n$-th truncated family of harmonic metrics attached to
the isomonodromic deformation, and denote the pure holomorphic part of $h_0^{-1}h_t$ by
\begin{equation}\label{eq:G-pure-holomorphic}
G_t:=\operatorname{id}+\sum_{k=1}^nt^kg_k.
\end{equation}

We first take $n=1$ and state the first-order
deformation theory of $\sigma_{\mathrm{Dol}}$ by Hu--Sun--Zuo \cite[Theorem~A and Proposition~4.2]{HSZ}.

\begin{thm}[First-order obstruction and normalization]\label{thm:first-order-normalization}
Let $n=1$.  Then $\sigma_{\mathrm{Dol}}\circ\gamma$ is holomorphic on
$\operatorname{Spec}A_1$ if and only if
\[
\theta_{0,*}([\eta_1])=0
\quad\text{in}\quad
\mathbb H^1\!\left(
X_0,(\operatorname{End}E_0,\operatorname{ad}(\theta_0))
\right).
\]
If these equivalent conditions hold, then we have
\begin{equation}\label{eq:g1-holomorphicity}
[\theta_0,g_1]=0,
\qquad
\eta_1\lrcorner\theta_0
=\frac12\bar\partial_0g_1,
\end{equation}
and there exists a first-order gauge transformation $\mathscr U_1$ such that
\begin{equation}\label{eq:first-order-higgs}
\mathscr U_1^{-1}\theta_t\mathscr U_1
=P'_{t\eta_1}\!\left(
\theta_0-\frac t2D_{h_0}^{1,0}g_1
\right)
\pmod{t^2}.
\end{equation}
\end{thm}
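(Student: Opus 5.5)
We would work directly with the first-order variation of the harmonic bundle along $\gamma$, reading off both the obstruction and the normalization from the Hitchin--Simpson equation. The flat connection attached to $h_t$ is $D_t=D_{h_t}+\theta_t+\theta_t^{*h_t}$, and isomonodromy means $D_t$ is gauge equivalent, modulo $t^2$ and $\bar t^2$, to the fixed flat connection $D_0=D_{h_0}+\theta_0+\theta_0^{*h_0}$ on the fixed $C^\infty$ bundle. The new holomorphic structure and Higgs field are recovered from $D_0$ by splitting it into types for the deformed complex structure using $P'_{t\eta_1}$ and $P''_{t\eta_1}$, and then correcting by the first-order variation $\dot h=h_0^{-1}\partial_t h_t$ of the metric. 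We would decompose $\dot h$ into a $t$-part and a $\bar t$-part. The $t$-part is $g_1$ from \eqref{eq:G-pure-holomorphic}. The $\bar t$-component of the Higgs field is $\partial_{\bar t}\theta_t$.

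\textbf{Step 1 (linearize).} Linearize the Hitchin--Simpson equation in $t$ and $\bar t$ around $h_0$. For the $t$-direction, the class of the Higgs bundle variation in $\mathbb H^1(X_0,(\operatorname{End}E_0,\operatorname{ad}\theta_0))$ is represented by the pair $(\eta_1\lrcorner(\bar\partial\text{-part of }D_0),\ \eta_1\lrcorner\theta_0)$, corrected by $(\bar\partial_0\dot h,[\theta_0,\dot h])$-type terms. The $\bar t$-derivative is a similar pair built from $\bar\eta_1$ and the adjoint terms $\theta_0^{*}$, and it should represent the image of $[\eta_1]$ under $\theta_{0,*}$ after conjugation.

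\textbf{Step 2 (obstruction).} Holomorphicity of $\sigma_{\mathrm{Dol}}\circ\gamma$ means the $\bar t$-derivative of the moduli point vanishes. By Step 1 this happens exactly when the hypercohomology class $\theta_{0,*}([\eta_1])$ is zero, which gives the stated equivalence. We use the harmonic-theoretic identification of $\mathbb H^1$ with harmonic forms, a Kähler identity for $D''=\bar\partial_0+\theta_0$, to pass between the $\bar t$ and $t$ pictures.

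\textbf{Step 3 (normalization).} If $\theta_{0,*}([\eta_1])=0$, write the representative $(\eta_1\lrcorner\theta_0,0)$ as a $D''$-coboundary of some endomorphism $\frac12 g$. This yields $\bar\partial_0 g=2\eta_1\lrcorner\theta_0$ and $[\theta_0,g]=0$. The main obstacle is showing that $g$ coincides with the pure holomorphic part $g_1$ of the metric variation, rather than merely existing. We would deduce this from the $t$-linear part of the metric equation. That part forces $\dot h$ to be determined by the same Laplace-type equation $\Delta_{D''}\dot h=(\ldots)$, and uniqueness of harmonic metrics up to $\operatorname{Aut}(E_0,\theta_0)$ then lets us normalize $g_1=g$, possibly after absorbing a parallel piece.

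\textbf{Step 4 (Higgs field formula).} With $g_1$ fixed, we would define $\mathscr U_1=\operatorname{id}+\frac t2 g_1$, possibly with an additional $\bar t$-correction. We then compute $\mathscr U_1^{-1}\theta_t\mathscr U_1$ modulo $t^2$ by writing the $(1,0)$-part of $D_0$ in the new complex structure as $P'_{t\eta_1}$ applied to the old components. Using $\theta_t=P'$ of $\theta_0$ plus the metric correction $-\frac12 D^{1,0}_{h_0}g_1$ from the Chern connection variation gives \eqref{eq:first-order-higgs}. The relation $[\theta_0,g_1]=0$ guarantees that the conjugation introduces no extra terms.
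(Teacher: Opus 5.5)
The paper does not prove this theorem itself; it quotes it from Hu--Sun--Zuo. So I judge your outline on its own. Steps~1--2 can be made to work. Write $h_0^{-1}h_t=\mathrm{id}+tg_1+\bar t\,g_1^{\star}$, with adjoints taken with respect to $h_0$. Then the $\bar t$-coefficient of $(\bar\partial_t,\theta_t)$ in the $P''_{t\eta_1},P'_{t\eta_1}$ trivialization is $D''(\tfrac12 g_1^{\star})+\beta$, where, listing $(0,1)$- and $(1,0)$-components,
\[
\beta=\bigl(0,\ \bar\eta_1\lrcorner\theta_0^{\star}\bigr)-\tfrac12 D'g_1^{\star},\qquad D'=\partial_{h_0}+\operatorname{ad}\theta_0^{\star}.
\]
The form $\beta$ is $D''$-closed by the $\bar t$-part of $(D''_t)^2=0$. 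It differs from the conjugate of $(\eta_1\lrcorner\theta_0,0)$ by a $D'$-coboundary. Comparing harmonic projections, which commute with conjugation because $\Delta_{D'}=\Delta_{D''}$, then gives the stated equivalence.

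The genuine gap is in Step~3. The identities \eqref{eq:g1-holomorphicity} are pointwise statements about the specific $g_1$ produced by the isomonodromic harmonic metrics on the fixed flat bundle. Those metrics are unique up to $D$-parallel automorphisms, so $g_1$ cannot be ``normalized''. Uniqueness of harmonic metrics is a nonlinear statement, about a Higgs bundle that here moves with $t$, and it gives no relation between $g_1$ and your auxiliary $g$ with $D''g=2(\eta_1\lrcorner\theta_0,0)$. You also never show that $g$ satisfies the linearized metric equation. Harmonic theory alone only shows that the harmonic part of $\beta$ vanishes. But \eqref{eq:g1-holomorphicity} is equivalent to $\beta=0$ on the nose.

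The missing idea has two parts. First, $\beta$ is also $D'$-closed: its first term is the conjugate of the $D''$-cocycle $(\eta_1\lrcorner\theta_0,0)$, and its second term is $D'$-exact. Second, in degree one the $D'D''$-lemma is rigid. For $u\in\mathcal A^0(\End E_0)$ the K\"ahler identities give $\int\langle\Lambda D'D''u,u\rangle=c\,\|D''u\|^2$ with $c\neq0$. So $D'D''u=0$ forces $D''u=0$, and symmetrically with $D'$ and $D''$ exchanged. Hence a one-form in $\ker D'\cap\ker D''$ that is $D''$-exact or $D'$-exact vanishes identically.

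Now apply this to $\beta$. Holomorphicity makes $\beta$ $D''$-exact. Alternatively, $\theta_{0,*}([\eta_1])=0$, say $(\eta_1\lrcorner\theta_0,0)=D''v$, makes $\beta=D'(v^{\star}-\tfrac12 g_1^{\star})$ $D'$-exact. Either way $\beta=0$, and the $h_0$-adjoint of $\beta=0$ is exactly $[\theta_0,g_1]=0$ and $\bar\partial_0g_1=2\eta_1\lrcorner\theta_0$.

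Your own route can be repaired the same way. Both $g_1^{\star}$ and $g^{\star}$ solve $D''D'x=2D''(0,\bar\eta_1\lrcorner\theta_0^{\star})$, so the same integration by parts gives $g_1-g\in\ker D''$. This is a linear Bochner argument, not uniqueness of harmonic metrics.

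Finally, in Step~4 the $\bar t$-part of the gauge is not optional. After \eqref{eq:g1-holomorphicity}, the term $\bar t\,\bar\eta_1\lrcorner\theta_0^{\star}$ from the type change of $\theta_0^{\star}$ cancels against $-\tfrac{\bar t}{2}D^{1,0}_{h_0}g_1^{\star}$. But $\theta_t$ still contains $\tfrac{\bar t}{2}[\theta_0,g_1^{\star}]$, which $[\theta_0,g_1]=0$ does not remove. It is removed by $\mathscr U_1=\mathrm{id}-\tfrac{\bar t}{2}g_1^{\star}$. A $t$-part $\pm\tfrac t2 g_1$ is irrelevant for $\theta_t$.
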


Now we state the higher order deformation theory of the $\sigma_{\mathrm{Dol}}$. For this, we introduce some notions in \cite[Section~3.3]{HSYZ}. For $k\ge1$, let
\[
\operatorname{Comp}(k)
:=\left\{I=(i_1,\ldots,i_N)\,\middle|\,
N\ge1,\ i_a>0,\ i_1+\cdots+i_N=k\right\}.
\]
For $I=(i_1,\ldots,i_N)$ put $\ell(I)=N$.  Set $p_{(k)}=1$ and, for
$N\ge2$,
\[
p_{(i_1,\ldots,i_N)}
:=\prod_{a=2}^{N}\frac{i_a}{i_1+\cdots+i_a}.
\]
For $I=(i_1,\ldots,i_N)$ define
\begin{equation*}
b_I
:=\frac1{2^N}\sum_{j=0}^{N}
 p_{(i_1,\ldots,i_j)}
 p_{(i_N,\ldots,i_{j+1})},
\end{equation*}
with the endpoint terms understood in the usual way. Let $\{g_i\}_{i=1}^n$ be all pure-holomorphic variations of the harmonic metric defined in \eqref{eq:G-pure-holomorphic}. The smooth endomorphisms
$x_k$ are determined recursively by
\begin{equation}\label{eq:xk-triangular}
g_k^{\star_{h_0}}
=\sum_{I\in\operatorname{Comp}(k)}b_Ix_I,
\qquad
x_I:=x_{i_1}\cdots x_{i_N}.
\end{equation}

Finally set
\begin{equation}\label{eq:Y-HSYZ-definition}
y_m:=x_m^{\star_{h_0}},
\qquad
Y:=\sum_{m=1}^nt^my_m,
\qquad
\mathfrak E:=t\cdot\frac{d}{d t}.
\end{equation}

For arbitrary $n$, the same finite-order construction gives the
higher-order normalization of Hu--Sun--Yang--Zuo; see in particular
\cite[Section~5.1, especially Theorem~5.1]{HSYZ}.

\begin{thm}[Higher-order normalization]\label{thm:higher-order-normalization}
Assume that $\sigma_{\mathrm{Dol}}\circ\gamma$ is
holomorphic on $\operatorname{Spec}A_n$, equivalently that the induced
isomonodromic Higgs deformation
$(\mathcal E,\bar\partial_t,\theta_t)$ is holomorphic on $X_n$.  Let
$Y$ be defined by
\eqref{eq:xk-triangular}--\eqref{eq:Y-HSYZ-definition}.  Then there exists
a gauge transformation $\mathscr U\in\mathcal A^0(\End\mathcal E)\otimes\mathbb C[t,\bar t]/(t,\bar t)^{n+1} $ such that
\begin{equation*}
\mathscr U^{-1}\theta_t\mathscr U
=P'_{\eta}\!\left(
\theta_0-\frac12D_{h_0}^{1,0}Y
\right).
\end{equation*}
Moreover, in the same gauge,
\begin{equation*}
\mathscr U^{-1}\bar\partial_t\mathscr U
=\pi''_{\eta}D_{h_0}
+P''_{\eta}\!\left(
-\frac12[\theta_0^{\star_{h_0}},Y]
\right),
\end{equation*}
where $\pi''_{\eta}D_{h_0}$ is the $(0,1)$-part of the smooth operator with respect to the complex structure $X_n$. For $n=1$, this recovers
the gauge-equivalent normal form in Theorem~\ref{thm:first-order-normalization}.
\end{thm}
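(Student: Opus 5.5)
We would prove Theorem~\ref{thm:higher-order-normalization} by induction on $n$, with Theorem~\ref{thm:first-order-normalization} as the base case, working throughout on the fixed $C^\infty$ bundle $\mathcal E=E_0$ over the fixed differentiable manifold $X_0$. The starting point is that isomonodromy means the flat connection does not move: $D=D_{h_0}+\theta_0+\theta_0^{\star_{h_0}}$ is fixed, and only the complex structure $X_n$ (encoded by $\eta$) and the harmonic metric $h_t$ vary. For each $t$ the Higgs data are recovered by decomposing $D$ with respect to $h_t$ and the type projections $P'_\eta,P''_\eta$:
\[
D=\partial_{h_t}+\bar\partial_t+\theta_t+\theta_t^{\star_{h_t}} ,
\]
so that $\bar\partial_t+\theta_t=\pi''_\eta D_{h_t}+P'_\eta(\text{self-adjoint part of }D\text{ w.r.t. }h_t)$. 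Writing $H_t=h_0^{-1}h_t$, we would pass to an $h_0$-unitary frame by a gauge $\mathscr U$ that is a ``square root'' of $H_t$. The operators $\mathscr U^{-1}\theta_t\mathscr U$ and $\mathscr U^{-1}\bar\partial_t\mathscr U$ are then expressed through $h_0$-data together with $\mathscr U^{-1}D\mathscr U$.

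The key steps, in order, are as follows.

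\begin{enumerate}
\item Expand $H_t$ in $t,\bar t$ modulo $(t,\bar t)^{n+1}$ and separate the pure holomorphic part $G_t$ from \eqref{eq:G-pure-holomorphic}, the pure antiholomorphic part $G_t^{\star_{h_0}}$, and the mixed $t^a\bar t^b$ terms.

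\item Use holomorphicity of $\sigma_{\mathrm{Dol}}\circ\gamma$ (no $\bar t$-dependence of the Higgs data, up to gauge) together with the harmonic equation order by order. Inductively, using the order-$(n-1)$ normal form and the analogues of \eqref{eq:g1-holomorphicity}, we would show that the mixed terms are forced to be products of the pure parts. In other words, $H_t$ factors as $\mathscr U^{\star}\mathscr U$ with $\mathscr U$ determined by the $x_k$.

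\item Define $\mathscr U$ explicitly as the ordered product built from $X:=\sum t^k x_k$ and $Y=X^{\star_{h_0}}$. The recursion \eqref{eq:xk-triangular} is exactly the statement that the pure holomorphic part of $\mathscr U^{\star}\mathscr U$ equals $G_t$. The coefficients $p_I$ are those of the iterated integral solving $\mathfrak E\,\mathscr U=(\tfrac12 \mathfrak E Y)\,\mathscr U$, and $b_I$ collects the two-sided splitting into a left factor of length $j$ and a right factor of length $N-j$.

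\item Compute $\mathscr U^{-1}D\mathscr U=D+\mathscr U^{-1}[D,\mathscr U]$ and project. The $(1,0)$ self-adjoint part gives $P'_\eta(\theta_0-\tfrac12 D^{1,0}_{h_0}Y)$. The $(0,1)$ part gives $\pi''_\eta D_{h_0}+P''_\eta(-\tfrac12[\theta_0^{\star_{h_0}},Y])$. All higher commutator terms must cancel because of the relations $[\theta_0,g_k]$-type and $\bar\partial$-type identities obtained in step~2.
\end{enumerate}

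The main obstacle is the cancellation in steps~2 and~4. One must show that every nonlinear term in $\mathscr U^{-1}[D,\mathscr U]$, and every mixed $t^a\bar t^b$ coefficient of $H_t$, is absorbed exactly by the combinatorial weights $b_I$, leaving an expression \emph{linear} in $Y$. We would attack this with generating functions rather than coefficient by coefficient. Applying $\mathfrak E$ turns the ordered products into a first-order ODE in $\log t$, and the identity $\sum_j p_{(i_1,\dots,i_j)}p_{(i_N,\dots,i_{j+1})}$ becomes the Leibniz rule for $\mathfrak E(\mathscr U^{\star}\mathscr U)$. We would then verify that the Maurer--Cartan equation for $\eta$ and the order-$n$ holomorphicity condition close the induction at the top degree $t^n$.
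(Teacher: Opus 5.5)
The paper does not prove this theorem. It imports it from \cite[Section~5.1, Theorem~5.1]{HSYZ}, and the substance of that result is a pair of order-by-order identities forced by holomorphicity. The paper itself quotes them in the proof of Theorem~\ref{thm:truncated-absolute-lifting}: with $M=\theta_0-\frac12D^{1,0}_{h_0}Y$, they read $\eta(M)+\frac14\mathfrak E^{-1}[[\theta_0^{\star_{h_0}},Y],\mathfrak EY]=\frac12\bar\partial_0Y$ and $\frac14\mathfrak E^{-1}[D^{1,0}_{h_0}Y,\mathfrak EY]=\frac12[\theta_0,Y]$. Your proposal never derives these or any substitute. The place where all the work lies (``all higher commutator terms must cancel'', ``we would attack this with generating functions'') is a programme, not an argument. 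The relations needed beyond first order are also genuinely nonlinear, not of ``$[\theta_0,g_k]$-type''. Since $b_{(2)}=1$ and $b_{(1,1)}=\frac12$, one has $g_2=y_2+\frac12y_1^2$, and the second identity at order $t^2$ reads $[\theta_0,g_2]=\frac14[D^{1,0}_{h_0}g_1,g_1]$. Step~3 is inconsistent as written. A solution of $\mathfrak E\mathscr U=\frac12(\mathfrak EY)\mathscr U$ is holomorphic in $t$, so the pure holomorphic part of $\mathscr U^{\star}\mathscr U$ is $\mathscr U$ itself. Its $y_I$-coefficient is $2^{-\ell(I)}p_{(i_N,\dots,i_1)}$, not $b_I$ (e.g.\ $\frac18$ versus $b_{(1,1)}=\frac12$). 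What \eqref{eq:xk-triangular} actually encodes is $G_t=\mathscr B\mathscr A$, where $\mathfrak E\mathscr A=\frac12(\mathfrak EY)\mathscr A$ and $\mathfrak E\mathscr B=\mathscr B\cdot\frac12\mathfrak EY$.

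More seriously, the architecture breaks down for $n\ge2$. The normalizing gauge cannot in general be an isometry from $h_t$ to $h_0$, even up to a scalar function of $t$, so no square root of $H_t$ can produce the stated normal form. In such a gauge, $\mathscr U^{-1}D\mathscr U$ would equal the $h_0$-connection assembled from the normal form, namely $\mathcal D:=D_{h_0}+A-A^{\star}+P'_\eta(M)+P''_\eta(M^{\star})$ with $A:=-\frac12P''_\eta[\theta_0^{\star},Y]$ and adjoints taken with respect to $h_0$. It would therefore be flat. Write $\mathcal D=D+\Delta$ with $D=D_{h_0}+\theta_0+\theta_0^{\star}$, and set $D''=\bar\partial_0+\theta_0$, $D'=D^{1,0}_{h_0}+\theta_0^{\star}$. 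Using only \eqref{eq:g1-holomorphicity}, one finds $\Delta_t=-\frac12Dg_1$, $\Delta_{\bar t}=-\frac12Dg_1^{\star}$ and $\Delta_{t\bar t}=\frac14[D^{1,0}_{h_0}g_1^{\star},g_1]+\frac14[\bar\partial_0g_1,g_1^{\star}]$. The $t\bar t$-component of the curvature of $\mathcal D$ is then $-\frac14D''D'[g_1,g_1^{\star}]$. By the K\"ahler identities for $(E_0,\theta_0,h_0)$, this vanishes only if $[g_1,g_1^{\star}]$ is $D$-flat, hence zero in the stable case. That fails whenever $g_1$ is a nonzero nilpotent. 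An example is a VHS on the total space with irreducible restriction to $X_0$ and nonzero horizontal Higgs field, where $g_v=-2\widetilde\theta(V)$.

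So in the normal gauge the metric must acquire a non-scalar $t\bar t$-correction proportional to $[g_1,g_1^{\star}]$. Your step~2 claim that the mixed coefficients of $H_t$ are those of $\mathscr U^{\star}\mathscr U$, for $\mathscr U$ built from $Y$, cannot be right. It already fails for the trivial reason that $h_t\mapsto(1+|t|^2)h_t$ changes the mixed terms without changing $G_t$ or $Y$. A correct proof has to allow a non-isometric gauge and extract, order by order from holomorphicity and the harmonic-metric equation, the nonlinear identities quoted above. That is the content of the cited argument, and it is what your proposal is missing.
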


\subsection{Absolute lifting assuming holomorphicity}
\label{subsec:absolute-lifting}
Let $U\subset S$ be any complex analytic subvariety, and set
\[
X_U:=X\times_S U,
\qquad
p:X_U\longrightarrow U.
\]
For each $u\in U$, write
\[
[(E_u,\theta_u)]:=\sigma_{\mathrm{Dol}}(u).
\]
We denote by $(E_U,\theta_U)$ the resulting real analytic family of Higgs
bundles on $X_U$.

When each $(E_u,\theta_u)$ is stable, let
$h=(h_u)_{u\in U}$ denote a real analytic choice of fiberwise harmonic
metrics.  This choice is not canonical: by stability, each $h_u$ is unique
only up to multiplication by a positive scalar.  Thus $h$ may be replaced by
$c\,h$ for any real analytic function $c:U\to \mathbb R_{>0}$.  

\begin{defi}
We say that the real analytic family $(E_U,\theta_U)$ is \textbf{absolutely
liftable} if there exists a holomorphic Higgs bundle
$(\widetilde E_U,\widetilde\theta)$ on $X_U$, with
\[
\widetilde\theta\in
H^0\!\left(X_U,
\operatorname{End}(\widetilde E_U)\otimes\Omega^1_{X_U}\right),
\qquad
\widetilde\theta\wedge\widetilde\theta=0,
\]
such that its relative restriction to $X_U/U$ is isomorphic to
$(E_U,\theta_U)$.  We call $(\widetilde E_U,\widetilde\theta)$ an
\textbf{absolute lift} of $(E_U,\theta_U)$.
\end{defi}

\begin{rmk}
Note that with this definition, absolute liftability of $(E_U,\theta_U)$ implies that
\(\sigma_{\mathrm{Dol}}|_U\) is holomorphic.  
\end{rmk}

\begin{thm}[Absolute lifting on a holomorphic isomonodromic submanifold]
\label{thm:absolute-lifting}
Assume that $U$ is a contractible Stein manifold and $\sigma_{\mathrm{Dol}}|_U$ is
holomorphic.  Let $\nabla$ be the flat connection of the whole isomonodromic
family.  There is a suitable real analytic choice of fiberwise harmonic
metric $h$ such that, if $\nabla^{\star_h}$ denotes the $h$-adjoint
connection and on $X_U$ we set
\[
D_h:=\frac12(\nabla+\nabla^{\star_h}),
\qquad
\Psi:=\frac12(\nabla-\nabla^{\star_h}),
\qquad
\widetilde\theta:=\Psi^{1,0},
\]
then $\widetilde\theta$ is an absolute Higgs field on $X_U$ and satisfies
\begin{enumerate}
\item its image in
$\operatorname{End}(E_U)\otimes\Omega^1_{X_U/U}$ is the given relative
Higgs field $\theta_U$; 
\item $D_h^{0,1}\widetilde\theta=0$;
\item $\widetilde\theta\wedge\widetilde\theta=0$.
\end{enumerate}
In particular, the relative holomorphic Higgs field admits an absolute lift.
\end{thm}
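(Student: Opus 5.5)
The plan is to reduce all three assertions to the vanishing of a single tensor: the $(1,0)$-part of the $h$-unitary connection in the horizontal directions should differ from a holomorphic structure only by terms measured by the first-order data of Theorem~\ref{thm:first-order-normalization}.

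\textbf{Setup.} On $X_U$ the flat connection $\nabla$ is a genuine absolute flat connection, since the isomonodromic family is a local system on $X_U$; this uses that $U$ is contractible, so the multivaluedness disappears. For any smooth positive metric $h$ on the underlying bundle, the decomposition
\[
\nabla=D_h+\Psi
\]
is canonical. Here $D_h$ is $h$-unitary and $\Psi$ is $h$-self-adjoint. Flatness of $\nabla$ gives the standard identities
\[
F_{D_h}+\Psi\wedge\Psi=0,\qquad D_h\Psi=0 .
\]

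\textbf{Step 1: reduction to $\Psi^{0,1}$ being adjoint to $\Psi^{1,0}$.} Write $\Psi=\widetilde\theta+\widetilde\theta^{\star_h}+\epsilon$, where
\[
\epsilon:=\Psi^{0,1}-(\Psi^{1,0})^{\star_h}.
\]
Restricted to each fiber $X_u$, harmonicity of $h_u$ is exactly the Simpson condition. It gives $\epsilon|_{X_u}=0$, $\widetilde\theta|_{X_u}=\theta_u$ and $D_h^{0,1}|_{X_u}=\bar\partial_{E_u}$, which is (1).

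Next, decompose $D_h\Psi=0$ and $F_{D_h}+\Psi\wedge\Psi=0$ by type on $X_U$. If $\epsilon\equiv0$ on $X_U$, then:
\begin{itemize}
\item the $(1,1)$-part of $D_h\Psi=0$, together with its adjoint, yields $D_h^{0,1}\widetilde\theta=0$, which is (2);
\item the $(2,0)$-part of $\nabla^2=0$, combined with its $h$-conjugate, yields $\widetilde\theta\wedge\widetilde\theta=0$ and $(D_h^{0,1})^2=0$, which is (3) plus integrability.
\end{itemize}
This is Simpson's standard argument that a pluriharmonic metric gives a Higgs bundle. So the whole theorem reduces to choosing $h$ so that the metric is pluriharmonic in the mixed horizontal and vertical directions, i.e.\ $\epsilon=0$.

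\textbf{Step 2: identify $\epsilon$ with the first-order data.} Fix $u\in U$ and a tangent vector $v\in T_uU$; this gives a first-order germ $\gamma$. With a horizontal lift of $v$, the $(0,1)$-horizontal component of $\epsilon$ is computed by the first-order variation of $h$. The form $\eta_1\lrcorner\theta_0$ measures the failure of the holomorphic horizontal direction, and $g_1$ is the pure holomorphic part of $h_0^{-1}\partial_v h$.

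Holomorphicity of $\sigma_{\mathrm{Dol}}|_U$ gives the normal form \eqref{eq:first-order-higgs} with
\[
[\theta_0,g_1]=0,\qquad \eta_1\lrcorner\theta_0=\tfrac12\bar\partial_0 g_1
\]
from Theorem~\ref{thm:first-order-normalization}. Substituting, I expect the vertical-$(0,1)$/horizontal-$(1,0)$ component of $\epsilon$ to reduce to $D^{1,0}_{h_0}$- and $\bar\partial_0$-derivatives of $g_1-c_v\,\mathrm{id}$. Here $c_v$ is the ambiguity from rescaling the metric. Applying $\bar\partial_0 g_1=2\eta_1\lrcorner\theta_0$ and $[\theta_0,g_1]=0$ should kill all non-scalar contributions.

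The remaining obstruction is the pure-holomorphic scalar part. By stability, the holomorphic endomorphisms commuting with $\theta_0$ are scalars, and the scalar part of $g_1$ is a $(1,0)$-form $\alpha$ on $U$. Replacing $h$ by $e^{\phi}h$ changes $\alpha$ by $\partial\phi$. Thus one needs $\alpha$ to be $\partial$ of a real function.

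\textbf{Step 3: the main obstacle, solving for $\phi$.} The condition to be solved is $\alpha=\partial\phi$ with $\phi$ real, which means $\alpha$ is $\bar\partial$-closed in the appropriate sense. I expect flatness to give $\bar\partial\alpha+\partial\bar\alpha=0$, but not the stronger closedness needed. My first attempt is to show from the curvature identity $F_{D_h}+\Psi\wedge\Psi=0$, traced and integrated over the fibers, that the form $\bar\partial\alpha$ is real and $d$-closed. If so, on the Stein contractible $U$ the $\partial\bar\partial$-lemma solves $\bar\partial\alpha=\partial\bar\partial\phi$. Subtracting $\partial\phi$ then leaves a holomorphic $1$-form, which is $d$ of a holomorphic function $f$ by contractibility, and one absorbs it by rescaling with $e^{2\operatorname{Re} f}$.

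Separately, I would verify that modifying $h$ by scalars preserves fiberwise harmonicity, which holds since $h$ is rescaled by a function constant on each fiber. I would also check that the purely horizontal components of $\epsilon$ vanish automatically, using higher-order normalization (Theorem~\ref{thm:higher-order-normalization}) or integrability.

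I expect this last step, reconciling the scalar ambiguity and establishing the needed closedness of $\alpha$, to be the main obstacle.
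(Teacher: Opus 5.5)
There is a genuine gap at the start of your argument, in Step~1. Because $\Psi=\frac12(\nabla-\nabla^{\star_h})$ is $h$-self-adjoint, as you note yourself, one has $\Psi^{0,1}=(\Psi^{1,0})^{\star_h}$ automatically. So your $\epsilon$ vanishes identically for every metric $h$, harmonic or not. The implication ``$\epsilon\equiv0\Rightarrow$ (2), (3)'' would then make every metric on every flat bundle pluriharmonic, which is false. Already for a flat line bundle with $h=e^{f}$ in a flat frame, $\epsilon=0$ but $D_h^{0,1}\widetilde\theta=-\frac12\bar\partial\partial f\neq0$ in general.

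What flatness of $\nabla$ and $\nabla^{\star_h}$ actually gives is only
\begin{itemize}
\item $D_h^{1,0}\widetilde\theta=0$;
\item $F_{D_h}^{2,0}=-\widetilde\theta\wedge\widetilde\theta$;
\item the skew-adjointness $D_h^{0,1}\widetilde\theta=-(D_h^{0,1}\widetilde\theta)^{\star_h}$.
\end{itemize}
None of these forces (2) or (3). The real obstruction is the pseudo-curvature $(D_h^{0,1}+\widetilde\theta)^2$, and it has to be killed using the first-order data.

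In particular, you have no argument for (3). The missing idea is the formula $\widetilde\theta(V)=-\frac12 g_v$ for a horizontal lift $V$, which follows from $\Psi=-\frac12H^{-1}dH$ in a flat frame. With it, the mixed components of $\widetilde\theta\wedge\widetilde\theta$ are $-\frac12[\theta_0(\xi),g_v]=0$, and the horizontal ones are $\frac14[g_v,g_w]$. The latter vanish because $[g_v,g_w]$ commutes with $\theta_0$ (by Jacobi) and is $\bar\partial_0$-closed (by $\bar\partial_0g_v=2\eta_v\lrcorner\theta_0$ and $[\theta_0,g_w]=0$). Stability then makes it a scalar, and it is traceless, so it is zero. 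Taking adjoints then gives $F_{D_h}^{0,2}=0$.

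The ingredients you list in Steps~2--3 are the right ones: $[\theta_0,g_1]=0$, $\bar\partial_0g_1=2\eta_1\lrcorner\theta_0$, stability, and a scalar rescaling fixed by the $\partial\bar\partial$-lemma. But you attach the scalar obstruction to the wrong component. Rescaling $h$ by $e^{\varphi}$ with $\varphi$ pulled back from $U$ changes $D_h^{0,1}\widetilde\theta$ only by $-\frac12p^*(\bar\partial_U\partial_U\varphi)\operatorname{id}$. That is purely horizontal--horizontal, so it cannot affect the vertical-$(0,1)$/horizontal-$(1,0)$ component. That mixed component in fact vanishes outright: using $[\bar\xi,V]^{1,0}=-\eta_v(\bar\xi)$, it equals $(\eta_v\lrcorner\theta_0-\frac12\bar\partial_0g_v)(\bar\xi)$. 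The other mixed type then follows from skew-adjointness.

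Conversely, the horizontal--horizontal component, which you expect to vanish ``automatically'', is exactly where the obstruction lives. To handle it:
\begin{itemize}
\item Apply the integrable operator $D_h^{0,1}$ (integrability comes from (3)) to $D_h^{0,1}\widetilde\theta$ and to $\widetilde\theta\wedge\widetilde\theta=0$. This shows the component is a holomorphic Higgs endomorphism, hence of the form $p^*\beta\,\operatorname{id}$ by stability.
\item The trace identity $\operatorname{tr}(D_h^{0,1}\widetilde\theta)=-\frac12\bar\partial\partial\log\det H$ shows that $\beta$ is closed, which your Step~3 only conjectures.
\item Skew-adjointness shows that $\beta$ is imaginary.
\item The $\partial\bar\partial$-lemma on the contractible Stein $U$ then produces the real $\varphi$.
\end{itemize}
Finally, since every step uses stability, you also need to reduce the polystable case to the stable factors. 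This goes through Proposition~\ref{prop:app-polystable-reduction} and the block metric $\bigoplus_i h_i\otimes H_i$, and your proposal omits it.
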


\begin{proof}
\noindent\textbf{Step 1: the stable case.}
Assume first that $(E_u,\theta_u)$ is stable for every $u\in U$.
Fix any point in $U$, denoted by $0\in U$. In a local $\nabla$-flat frame we have $\nabla=d$ and $\nabla^{\star_h}=d+H^{-1}dH,$ so
\begin{equation}\label{eq:absolute-flat-frame}
\Psi=-\frac12H^{-1}dH,
\qquad
\widetilde\theta=-\frac12\bigl(H^{-1}dH\bigr)^{1,0}.
\end{equation}
On the fiber $X_0$ this is the usual harmonic-bundle decomposition; hence (1) holds. 

\medskip

Let $v\in T_0^{1,0}U$ and choose a
holomorphic curve
$\gamma:(\Delta,0)\to(U,0)$ with
$\gamma_*(\partial/\partial t)=v$.  Let $\eta_v$ be the Kodaira--Spencer class of $X_U/U$ along $v$ and write $g_v$ as the first-order variation of $H_0^{-1}H$ along $v$ as in \eqref{eq:G-pure-holomorphic}.  Since
$\sigma_{\mathrm{Dol}}|_U$ is holomorphic,
Theorem~\ref{thm:first-order-normalization} gives
\begin{equation}\label{eq:absolute-first-order-identities}
[\theta_0,g_v]=0,
\qquad
\bar\partial_0g_v=2\eta_v\lrcorner\theta_0,
\end{equation}
and, after a first-order gauge transformation,
\begin{equation*}
\theta_t
=P'_{t\eta_v}\!\left(
\theta_0-\frac t2D_{h_0}^{1,0}g_v
\right)+O(t^2).
\end{equation*}
Choose a $C^\infty$ splitting of $T^{1,0}X_U$ near $X_0$. By \eqref{eq:absolute-flat-frame}, there is a horizontal lift $V\in T^{1,0}X_U$ of $v$ such that 
\begin{equation}\label{eq:absolute-horizontal-value}
\widetilde\theta(V)=-\frac12g_v.
\end{equation}

We now prove (3) by checking the three types of pairs determined by this
splitting.

\smallskip
\noindent\emph{Horizontal--horizontal.}
Let $v,w\in T_0^{1,0}U$, with horizontal lifts $V,W$.  By
\eqref{eq:absolute-horizontal-value},
\[
(\widetilde\theta\wedge\widetilde\theta)(V,W)
=\frac14[g_v,g_w].
\]
It remains to show that $[g_v,g_w]=0$.  By
\eqref{eq:absolute-first-order-identities} and the Jacobi identity,
\[
[\theta_0,[g_v,g_w]]=0.
\]
Moreover,
\[
\begin{aligned}
\bar\partial_0[g_v,g_w]
&=[\bar\partial_0g_v,g_w]+[g_v,\bar\partial_0g_w]\\
&=2[\eta_v\lrcorner\theta_0,g_w]
  +2[g_v,\eta_w\lrcorner\theta_0]\overset{\eqref{eq:absolute-first-order-identities}}{=}0.
\end{aligned}
\]
  Hence $[g_v,g_w]$ is a holomorphic Higgs
endomorphism of the stable Higgs bundle $(E_0,\theta_0)$, so it is scalar.
Its trace is zero because it is a commutator, and therefore
$[g_v,g_w]=0$. 

\smallskip
\noindent\emph{Vertical--horizontal.}
If $\xi\in T^{1,0}X_0$ is vertical, then
\[
(\widetilde\theta\wedge\widetilde\theta)(\xi,V)
=-\frac12[\theta_0(\xi),g_v]=0
\]
by \eqref{eq:absolute-first-order-identities} and \eqref{eq:absolute-horizontal-value}.  Thus every mixed
vertical--horizontal component vanishes.

\smallskip
\noindent\emph{Vertical--vertical.}
For vertical vectors $\xi_1,\xi_2$,
\[
(\widetilde\theta\wedge\widetilde\theta)(\xi_1,\xi_2)
=(\theta_0\wedge\theta_0)(\xi_1,\xi_2)=0.
\]

These three cases exhaust $\wedge^2T^{1,0}X_U$ at points of $X_0$; since
$0\in U$ was arbitrary, we obtain (3).

\medskip

It remains to prove (2).  Both $\nabla=D_h+\Psi$ and
$\nabla^{\star_h}=D_h-\Psi$ are flat, hence
\[
D_h\Psi=0,
\qquad
F_{D_h}+\Psi\wedge\Psi=0.
\]
Since $\Psi^{0,1}=\widetilde\theta^{\star_h}$, taking $h$-adjoints in (3) gives
$\Psi^{0,1}\wedge\Psi^{0,1}=0$.  The $(0,2)$-part of the second flatness
identity therefore gives $F_{D_h}^{0,2}=0$, so $D_h^{0,1}$ is an absolute
Dolbeault operator.

We check $D_h^{0,1}\widetilde\theta=0$ componentwise with respect to the same
vertical--horizontal splitting.

\smallskip
\noindent\emph{Vertical--vertical.}
On $X_0$ the operator $D_h^{0,1}$ restricts to $\bar\partial_0$ and
$\widetilde\theta$ restricts to $\theta_0$.  Hence this component is
$\bar\partial_0\theta_0=0$.

\smallskip
\noindent\emph{Mixed components.}
There are two mixed types, which we treat separately.

First let $\bar\xi\in T^{0,1}X_0$ be vertical and let $V$ be the chosen
horizontal lift of $v\in T_0^{1,0}U$.  Since $\widetilde\theta$ is of type
$(1,0)$, the covariant exterior derivative gives, at points of $X_0$,
\[
\bigl(D_h^{0,1}\widetilde\theta\bigr)(\bar\xi,V)
 =D_{h,\bar\xi}\bigl(\widetilde\theta(V)\bigr)
  -\widetilde\theta\bigl([\bar\xi,V]^{1,0}\bigr).
\]
The first term is determined by the horizontal value
\eqref{eq:absolute-horizontal-value}: because
$D_h^{0,1}|_{X_0}=\bar\partial_0$,
\[
D_{h,\bar\xi}\bigl(\widetilde\theta(V)\bigr)
 =-\frac12\bigl(\bar\partial_0g_v\bigr)(\bar\xi).
\]
For the second term, with the convention for the Kodaira--Spencer tensor used
in the projection $P'_{t\eta_v}$, the vertical $(1,0)$-part of the bracket is $[\bar\xi,V]^{1,0}_{\mathrm{vert}}=-\eta_v(\bar\xi).$ We therefore obtain
\[
-\widetilde\theta\bigl([\bar\xi,V]^{1,0}\bigr)
 =\theta_0\bigl(\eta_v(\bar\xi)\bigr)
 =\bigl(\eta_v\lrcorner\theta_0\bigr)(\bar\xi).
\]
Consequently,
\[
\bigl(D_h^{0,1}\widetilde\theta\bigr)(\bar\xi,V)
 =\bigl(\eta_v\lrcorner\theta_0
   -\frac12\bar\partial_0g_v\bigr)(\bar\xi)\overset{\eqref{eq:absolute-first-order-identities}}=0.
\]

By comparing the $(1,1)$-part of $D_h\Psi=0$, we have \begin{align}\label{eq_skewsymmetric}D_h^{0,1}\widetilde\theta=-\bigl(D_h^{0,1}\widetilde\theta\bigr)^{\star_h}.
\end{align}
Thus for the other mixed type, let $\bar V$ be a horizontal $(0,1)$-lift of
$\bar v\in T_0^{0,1}U$ and let $\xi\in T^{1,0}X_0$ be vertical. We have
\[
\bigl(D_h^{0,1}\widetilde\theta\bigr)(\bar V,\xi)
 =\bigl((D_h^{0,1}\widetilde\theta)(\bar\xi,V)\bigr)^{\star_h}=0.
\]
Thus both mixed components of $D_h^{0,1}\widetilde\theta$ vanish.

\smallskip
\noindent\emph{Horizontal--horizontal.}
Finally take $v,w\in T_0^{1,0}U$, with horizontal lifts $V,W$.  We first show
that the horizontal--horizontal component of $D_h^{0,1}\widetilde\theta$ is
necessarily scalar. 
 
 Since $F_{D_h}^{0,2}=0$, the operator $D_h^{0,1}$ is
integrable on $\operatorname{End}(E_U)$, and hence $D_h^{0,1}\bigl(D_h^{0,1}\widetilde\theta\bigr)=0.$ Let $\bar\xi$ be a vertical $(0,1)$-vector on $X_0$.  Since the base point
$0\in U$ was arbitrary, the vertical--vertical and both mixed components of
$D_h^{0,1}\widetilde\theta$ vanish on the neighborhood under consideration.
Evaluating the preceding identity on $(\bar\xi,\bar V,W)$ therefore gives
\[
\bar\partial_0\!\left(
  \bigl(D_h^{0,1}\widetilde\theta\bigr)(\bar V,W)
\right)=0.
\]
Thus
$\bigl(D_h^{0,1}\widetilde\theta\bigr)(\bar V,W)$ is a holomorphic
endomorphism of $E_0$.

Next apply $D_h^{0,1}$ to
$\widetilde\theta\wedge\widetilde\theta=0$.  By the Leibniz rule,
\[
0=
\bigl(D_h^{0,1}\widetilde\theta\bigr)\wedge\widetilde\theta
-\widetilde\theta\wedge
 \bigl(D_h^{0,1}\widetilde\theta\bigr).
\]
Evaluating on $(\bar V,W,\xi)$, where
$\xi\in T^{1,0}X_0$ is vertical, and using again the vanishing of the mixed
components, we obtain
\[
\left[
  \bigl(D_h^{0,1}\widetilde\theta\bigr)(\bar V,W),
  \theta_0(\xi)
\right]=0.
\]
Hence $\bigl(D_h^{0,1}\widetilde\theta\bigr)(\bar V,W)$ is a holomorphic
Higgs endomorphism of the stable Higgs bundle $(E_0,\theta_0)$, and therefore
is scalar.  Since the mixed components vanish, this scalar is independent of
the chosen horizontal lifts.  Consequently there is a $(1,1)$-form
$\beta$ on $U$ such that
\[
D_h^{0,1}\widetilde\theta=p^*\beta\,\operatorname{id}.
\]

It remains to use the scalar freedom of the harmonic metric.  In the local
$\nabla$-flat frame, taking traces in
$D_h^{0,1}\widetilde\theta$ gives
\[
\operatorname{tr}\bigl(D_h^{0,1}\widetilde\theta\bigr)
 =\bar\partial\,\operatorname{tr}(\widetilde\theta)
 =-\frac12\bar\partial\partial\log\det H.
\]
The right-hand side is $d$-closed.  If $r=\operatorname{rank}E$, the scalar
identity above therefore yields
\[
r\,p^*\beta
 =-\frac12\bar\partial\partial\log\det H,
\]
so $\beta$ is a closed $(1,1)$-form on $U$.  Moreover by \eqref{eq_skewsymmetric}, $\beta$ is a pure-imaginary $(1,1)$ form.  Together with $d\beta=0$, the $\partial\bar\partial$-lemma therefore gives a real-valued function $\varphi$ on $U$ such that $\beta=\frac12\bar\partial_U\partial_U\varphi.$

We now use the scalar normalization of $h$:
replace $h$ by $e^{\varphi}h$ and continue to denote the normalized metric by
$h$.  Since $\varphi$ is pulled back from $U$, in the same flat frame
\[
\widetilde\theta\longmapsto
\widetilde\theta-\frac12p^*(\partial_U\varphi)\,\operatorname{id}.
\]
The induced connection on $\operatorname{End}(E_U)$ is unchanged by this
scalar rescaling, because the added connection term is scalar.  Hence
\[
D_h^{0,1}\widetilde\theta
\longmapsto
D_h^{0,1}\widetilde\theta
 -\frac12p^*(\bar\partial_U\partial_U\varphi)\,\operatorname{id}
 =0.
\]
Thus the
horizontal--horizontal component also vanishes.

In summary, all components vanish, and this proves (2).

For later use, we record one additional consequence of the flatness identity
$D_h\Psi=0$.  Its $(2,0)$-part gives
$D_h^{1,0}\widetilde\theta=0$, and hence, together with (2), we have
\begin{equation*}
D_h\widetilde\theta=0.
\end{equation*}
This proves the theorem in the stable case.

\medskip
\noindent\textbf{Step 2: the polystable case.}
We now use the semisimplicity in Setup~\ref{setup:isomonodromic}.  Write the
fixed isotypic decomposition of the isomonodromic family on $X_U$ as
\begin{align}\label{eq_semisimple_decomp}
(V,\nabla)
=\bigoplus_{i=1}^k (V_i,\nabla_i)\otimes M_i,
\end{align}
where the $(V_i,\nabla_i)$ are pairwise nonisomorphic irreducible flat
families.  Fiberwise non-abelian Hodge theory gives
\begin{align}\label{eq_polys_decomp}
(E_U,\theta_U)
=\bigoplus_{i=1}^k(E_{i,U},\theta_{i,U})\otimes M_i,
\end{align}
with stable factors.  Since the total Dolbeault family is holomorphic,
Proposition~\ref{prop:app-polystable-reduction} implies that every stable-factor
family $(E_{i,U},\theta_{i,U})$ is holomorphic.  Applying Step~1 to each
factor gives a normalized harmonic metric $h_i$ and an absolute Higgs field
$\widetilde\theta_i$ satisfying the three assertions of the theorem and
\(D_{h_i}\widetilde\theta_i=0\).

Fix a positive definite Hermitian metric $H_i$ on each multiplicity space
$M_i$ and put
\[
h:=\bigoplus_i h_i\otimes H_i.
\]
Since the $H_i$ are constant,
\[
\nabla^{\star_h}
=\bigoplus_i\nabla_i^{\star_{h_i}}\otimes\operatorname{id}_{M_i},
\qquad
\widetilde\theta
=\bigoplus_i\widetilde\theta_i\otimes\operatorname{id}_{M_i}.
\]
The relative restriction, holomorphicity, and integrability equations are
block diagonal, so (1)--(3) follow factorwise.  The same calculation also
gives $D_h\widetilde\theta=0$.  This proves the polystable case and hence the
theorem.
\end{proof}
\begin{rmk}
The metric $h$ constructed above is not merely fiberwise harmonic.  In fact, it
defines a harmonic metric for the absolute Higgs bundle
$(E_U,D_h^{0,1},\widetilde\theta)$ on the total space $X_U$.
Indeed, the above proof shows that $h$ is a solution of the following Hitchin--Simpson equation on the whole space $X_U$:
$$
F_{D_h}
+
[\widetilde\theta,\widetilde\theta^{\star_h}]
=0.
$$
\end{rmk}

\subsection{Absolute lifting over an Artin base}
\label{subsec:finite-order-absolute-lifting}
The $n$-th truncated version of Theorem~\ref{thm:absolute-lifting} also holds. More precisely, we have the following truncated absolute lifting theorem.

\begin{thm}[Finite-order absolute lifting]
\label{thm:truncated-absolute-lifting}
Assume that $\sigma_{\mathrm{Dol}}\circ\gamma$ is holomorphic on
$\operatorname{Spec}A_n$, and let $E_n$ be the induced order $n$ holomorphic
thickening of $E_0$ on $X_n$.  Then the relative Higgs field on $E_n$
admits an absolute lift
\[
\widetilde\Theta_n
\in
H^0\!\left(
X_n,\,
\operatorname{End}(E_n)\otimes\Omega^1_{X_n}
\right)\quad\text{such that}\quad\widetilde\Theta_n\wedge\widetilde\Theta_n=0.
\]
Its image in
$\operatorname{End}(E_n)\otimes\Omega^1_{X_n/A_n}$
is the given relative Higgs field.
\end{thm}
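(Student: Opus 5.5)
Over $\operatorname{Spec}A_n$ we mimic Step~1 of the proof of Theorem~\ref{thm:absolute-lifting}, replacing first-order identities by the higher-order normal form of Theorem~\ref{thm:higher-order-normalization}. We work in the gauge $\mathscr U$ supplied there. The relative Higgs field is then $P'_\eta(\theta_0-\tfrac12D^{1,0}_{h_0}Y)$ and the holomorphic structure is $\pi''_\eta D_{h_0}+P''_\eta(-\tfrac12[\theta_0^{\star_{h_0}},Y])$. Since $X_n\to\operatorname{Spec}A_n$ has a one-dimensional base, $\Omega^1_{X_n}$ is an extension of $\mathcal O_{X_n}\,dt$ by $\Omega^1_{X_n/A_n}$; note $dt\wedge dt=0$. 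The natural candidate is
\[
\widetilde\Theta_n:=P'_\eta\!\left(\theta_0-\tfrac12D^{1,0}_{h_0}Y\right)-\tfrac12\,\mathfrak E(Y)\,\frac{dt}{t}.
\]
Here $\mathfrak E(Y)\,\frac{dt}{t}=\sum_m m\,y_m t^{m-1}dt$ is the truncated analogue of the horizontal value $\widetilde\theta(V)=-\tfrac12 g_v$ from \eqref{eq:absolute-horizontal-value}. Indeed, $\widetilde\theta=-\tfrac12(H^{-1}dH)^{1,0}$ in a flat frame, and the $dt$-component of $-\tfrac12 H^{-1}dH$ reduces to $-\tfrac12\partial_tY$ after the normalization defining the $x_k$.

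\textbf{Step 1.} I would first check that $\widetilde\Theta_n$ maps to the given relative field. This is immediate, since the $dt$-term dies in $\Omega^1_{X_n/A_n}$.

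\textbf{Step 2.} I would then prove integrability, $\widetilde\Theta_n\wedge\widetilde\Theta_n=0$. The relative--relative part vanishes because the relative Higgs field is integrable. Because $dt\wedge dt=0$, only the mixed term survives, so one needs $[\theta_t^{\rm rel},\partial_tY]=0$ modulo $t^{n}$, which is order-by-order the statement $[\theta_0-\tfrac12D^{1,0}Y,\,\partial_tY]=0$. At first order this is $[\theta_0,g_1]=0$ from \eqref{eq:g1-holomorphicity}. At higher order I would derive it inductively by the same mechanism as the horizontal--horizontal case of Theorem~\ref{thm:absolute-lifting}: the relevant coefficient is a Higgs endomorphism of $(E_0,\theta_0)$ that is holomorphic modulo lower-order terms, which the induction kills. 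In the polystable case one reduces factorwise via Proposition~\ref{prop:app-polystable-reduction}, as in Step~2 there.

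\textbf{Step 3.} The remaining task is holomorphicity: $\widetilde\Theta_n$ must be $\bar\partial$-closed for the absolute Dolbeault operator on $E_n$ over $X_n$. The relative part is already holomorphic by hypothesis. The mixed component reduces, as in the mixed computation of Theorem~\ref{thm:absolute-lifting}, to the identity
\[
\bar\partial_t$-derivative of the $dt$-coefficient $=\ \partial_t\eta\lrcorner\theta_t^{\rm rel}\quad(\text{up to the } [\theta_0^{\star},Y] \text{ correction}),
\]
which is the $t$-derivative of the holomorphicity identities encoded in Theorem~\ref{thm:higher-order-normalization}. At first order this is exactly $\eta_1\lrcorner\theta_0=\tfrac12\bar\partial_0g_1$. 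Over an Artin base there is no horizontal--horizontal $(1,1)$ component, because the base has no $\bar t$-direction in the holomorphic sense and $dt\wedge dt=0$. Hence no scalar rescaling of the metric is needed, unlike in the analytic case.

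The main obstacle is Step~3, specifically verifying the higher-order version of $\eta_1\lrcorner\theta_0=\tfrac12\bar\partial_0g_1$ with the combinatorial coefficients $b_I$ in play. I would first try to derive it formally by applying $\mathfrak E$ to the integrability identity $[\bar\partial_t,\theta_t]=0$ written in the normal form of Theorem~\ref{thm:higher-order-normalization}. If this fails, the fallback is to prove existence directly by induction on $n$: lifting from $A_{k-1}$ to $A_k$ is obstructed by a class in $\mathbb H^1(\operatorname{End}E_0,\operatorname{ad}\theta_0)$, and one would show this class coincides with the holomorphicity obstruction of $\sigma_{\mathrm{Dol}}$, which vanishes by hypothesis.
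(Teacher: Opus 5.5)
Your candidate is the paper's, since $-\frac12\mathfrak E(Y)\frac{dt}{t}=-\frac12\frac{dY}{dt}\,dt$, so $\widetilde\Theta_n=\theta_t-\frac12\frac{dY}{dt}dt$. Your reduction of holomorphicity and integrability to the mixed $dt$-components is also the paper's: the base--base term dies by $dt\wedge dt=0$, and the first order reduces to \eqref{eq:g1-holomorphicity}.

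The gap is at orders $\ge2$. Both steps rest on two higher-order identities that you never obtain. The paper takes them from \cite[Eqs.~(89),~(90) in the proof of Lemma~5.2]{HSYZ}. With $M=\theta_0-\frac12D^{1,0}_{h_0}Y$, they read
\[
\eta(M)+\tfrac14\,\mathfrak E^{-1}\bigl[[\theta_0^{\star_{h_0}},Y],\mathfrak E Y\bigr]=\tfrac12\,\bar\partial_0Y,
\qquad
\tfrac14\,\mathfrak E^{-1}\bigl[D^{1,0}_{h_0}Y,\mathfrak E Y\bigr]=\tfrac12\,[\theta_0,Y].
\]
Applying $\mathfrak E$ to the first gives the vanishing of the $\iota_{\mathfrak E}$-contraction of the mixed part of $\bar\partial_t\widetilde\Theta_n$. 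That mixed part is $P''_\eta$ of $\frac{d\eta}{dt}(M)+\eta\bigl(\frac{dM}{dt}\bigr)+\frac14\bigl[[\theta_0^{\star_{h_0}},Y],\frac{dY}{dt}\bigr]-\frac12\bar\partial_0\frac{dY}{dt}$; your schematic identity omits the term $\eta(\frac{dM}{dt})$. Applying $\mathfrak E$ to the second identity gives $[M,\mathfrak EY]=0$. Injectivity of $\iota_{\mathfrak E}$ on $\Omega^1_{A_n}$ then concludes. These identities are the genuine higher-order analogues of \eqref{eq:g1-holomorphicity}, and they are not part of the statement of Theorem~\ref{thm:higher-order-normalization}.

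Your proposed substitutes do not produce them.

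\textbf{Step 2.} The order-$k$ coefficient of $[M,\mathfrak EY]$ is $R_k=k[\theta_0,y_k]-\frac12\sum_{i+j=k}j\,[D^{1,0}_{h_0}y_i,y_j]$. This is an $\End E_0$-valued $(1,0)$-form, not an endomorphism, so the argument ``holomorphic Higgs endomorphism of a stable bundle, hence scalar, hence zero'' has nothing to act on. Even if you showed that $R_k$ is holomorphic, trace-free and commutes with $\theta_0$, that would not force $R_k=0$: the trace-free part of $\theta_0$ has all three properties.

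\textbf{Step 3.} Applying $\mathfrak E$ to $\bar\partial_t\theta_t=0$ produces an identity of $(1,1)$-forms involving $\bar\partial_0D^{1,0}_{h_0}Y$. That is one derivative above the $(0,1)$-form identity you need, which implies it but is not implied by it. Compare Theorem~\ref{thm:first-order-normalization}, where $\eta_1\lrcorner\theta_0=\frac12\bar\partial_0g_1$ is asserted alongside the normal form rather than derived from it.

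\textbf{Fallback.} This is not really an induction. Because $dt\wedge dt=0$, the conditions on the $dt$-coefficient are linear. A lift exists precisely when a single class vanishes: the image of the Kodaira--Spencer class of $X_n/A_n$ in $\mathbb H^1$ of the Higgs complex of $(E_n,\theta_t)|_{X_{n-1}}$. Asserting that this class ``coincides with the holomorphicity obstruction of $\sigma_{\mathrm{Dol}}$'' restates the theorem. Proving it is exactly what the cited identities from \cite{HSYZ} accomplish.
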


\begin{proof}
We work in the normal gauge of
Theorem~\ref{thm:higher-order-normalization} and suppress the gauge
transformation $\mathscr U$ from the notation.  Thus we write
$$
\theta_t
=
P'_{\eta}(M),
\qquad
M:=\theta_0-\frac12D_{h_0}^{1,0}Y,
$$
and
$$
\bar\partial_t
=
\pi''_{\eta}D_{h_0}
+
P''_{\eta}\!\left(
-\frac12[\theta_0^{\star_{h_0}},Y]
\right).
$$ In particular,
$(E_n,\bar\partial_t,\theta_t)$ is the given holomorphic relative Higgs
bundle.

We use the fixed differentiable trivialization underlying the
Maurer--Cartan class $\eta$ of $X_n$. Define
\begin{equation*}
\widetilde\Theta_n
:=
\theta_t-\frac12\frac{dY}{dt}dt.
\end{equation*}
Its relative restriction is clearly $\theta_t$. We first prove that $\widetilde\Theta_n$ is holomorphic as a section of $
\operatorname{End}(E_n)\otimes\Omega^1_{X_n}.
$

The purely vertical component vanishes because $\theta_t$ is a relative
holomorphic Higgs field.  

For the mixed component, using
$P'_{\eta}(M)=M-\eta(M)$ and the above expression for $\bar\partial_t$,
one obtains
$$
\bigl(\bar\partial_t\widetilde\Theta_n\bigr)_{\mathrm{mix}}
=
P''_{\eta}\!\left(
\frac{d\eta}{dt}(M)
+
\eta\!\left(\frac{dM}{dt}\right)
+
\frac14
\left[
[\theta_0^{\star_{h_0}},Y],
\frac{dY}{dt}
\right]
-
\frac12
\bar\partial_0\!\left(\frac{dY}{dt}\right)
\right)\wedge dt.
$$

Here $\bar\partial_t$ also denotes the induced Dolbeault operator on
$\operatorname{End}(E_n)$-valued forms.

The higher-order identities of Hu--Sun--Yang--Zuo
\cite[Equation~(89) in the proof of Lemma~5.2]{HSYZ} give
\begin{equation*}
\eta(M)
+
\frac14\mathfrak E^{-1}
\bigl[
[\theta_0^{\star_{h_0}},Y],
\mathfrak E Y
\bigr]
=\frac12\bar\partial_0Y,
\end{equation*}
where $\mathfrak E=t\frac{d}{dt}$ and
$\mathfrak E^{-1}$ acts coefficientwise on $(t)$.  Applying
$\mathfrak E$ yields
$$
(\mathfrak E\eta)(M)
+
\eta(\mathfrak E M)
+
\frac14
\bigl[
[\theta_0^{\star_{h_0}},Y],
\mathfrak E Y
\bigr]
=
\frac12\bar\partial_0(\mathfrak E Y).
$$
This is exactly the vanishing of the contraction of the mixed component
of $\bar\partial_t\widetilde\Theta_n$ with $\mathfrak E$.

Now
$$
\Omega^1_{A_n}
=
\bigoplus_{j=0}^{n-1}\mathbb C\,t^jdt,
\qquad
\iota_{\mathfrak E}(t^jdt)=t^{j+1},
$$
so contraction with $\mathfrak E$ is injective on the base-differential
factor.  Hence the mixed component vanishes, and therefore
$$
\widetilde\Theta_n
\in
H^0\!\left(
X_n,\,
\operatorname{End}(E_n)\otimes\Omega^1_{X_n}
\right).
$$

It remains to prove integrability.  The vertical--vertical component of
$\widetilde\Theta_n\wedge\widetilde\Theta_n$ vanishes because
$\theta_t\wedge\theta_t=0$.  The second higher-order identity
\cite[Equation~(90) in the proof of Lemma~5.2]{HSYZ} is
\begin{equation*}
\frac14\mathfrak E^{-1}
\bigl[
D_{h_0}^{1,0}Y,\mathfrak E Y
\bigr]
=\frac12[\theta_0,Y].
\end{equation*}
Applying $\mathfrak E$ gives $
[M,\mathfrak E Y]=0.
$ Since
$$
\iota_{\mathfrak E}
\left(
-\frac12\frac{dY}{dt}\,dt
\right)
=
-\frac12\mathfrak E Y,
$$
the last identity says precisely that the contraction of the mixed
component of
$\widetilde\Theta_n\wedge\widetilde\Theta_n$
with $\mathfrak E$ vanishes.  By the same injectivity argument, the mixed
component itself vanishes.  Finally, the base--base component is zero
because $dt\wedge dt=0$.  Thus $
\widetilde\Theta_n\wedge\widetilde\Theta_n=0.
$\end{proof}

\section{Flatness of spectral one-forms of the isomonodromic family}
\label{sec:spectral-flatness}
The isomonodromic section 
$\sigma_{\mathrm{Dol}}$ defined in Setup~\ref{setup:isomonodromic} is in general only real analytic.  Let $U\subset S$
be an irreducible complex subvariety such that $\sigma_{\mathrm{Dol}}|_U$ is
holomorphic.  We investigate the flatness of its relative spectral one-form.
Locally on $U$, the holomorphic section is
represented by a holomorphic family of Higgs bundles $(E_U,\theta_U)$ on
$X_U:=X\times_S U$ as in Section~\ref{sec:deformation-lifting}.

\begin{defi}[Relative spectral scheme and spectral one-form]\label{def_relative_s_oneform}
 The holomorphic family of Higgs bundles $(E_U,\theta_U)$ induces an \(\mathcal O_{X_U}\)-algebra morphism, denoted by
\[
\Theta_U:\operatorname{Sym}T_{X_U/U}\longrightarrow \mathcal End(E_U).
\]
The \textbf{relative spectral scheme} associated with the family of Higgs bundles $(E_U,\theta_U)$ is
\[
\Sigma_U
:=
\operatorname{Spec}_{X_U}
\left(
\operatorname{Sym}T_{X_U/U}/\ker\Theta_U
\right)
\subset T^*(X_U/U).
\]
Fixing any point in $U$ and denoting it as $0\in U$, its fiber
$\Sigma_0:=\Sigma_U\times_U\{0\}\subset T^*X_0$ is the spectral scheme of
$(E_0,\theta_0)$. Let
\[
\lambda_{\mathrm{can}}
\in
H^0\!\left(
T^*(X_U/U),\,
\Omega^1_{T^*(X_U/U)/U}
\right)
\]
be the tautological relative one-form. If
\(\iota:\Sigma_U\hookrightarrow T^*(X_U/U)\) denotes the inclusion, then
\[
\alpha:=\iota^*\lambda_{\mathrm{can}}
\in
H^0\!\left(
\Sigma_U,\Omega^1_{\Sigma_U/U}
\right)
\]
is called its \textbf{relative spectral one-form}.
\end{defi}

We now define the flatness of the relative spectral one-form \(\alpha\). Assume that
\[
(\Sigma_U)_{\mathrm{red}}=\bigcup_{i=1}^N\Sigma_{U,i}
\]
is the decomposition into irreducible components. For each \(i\), choose a resolution
\[
q_i:\widehat{\Sigma}_{U,i}\longrightarrow \Sigma_{U,i},
\]
and let $\widehat{\pi}_i:\widehat{\Sigma}_{U,i}\longrightarrow U$ be the induced morphism. After shrinking \(U\), we may choose a dense open subset
\(U^\circ\subset U_{\mathrm{reg}}\) such that
\[
\widehat{\pi}_i^\circ:
\widehat{\Sigma}_{U,i}^\circ
:=\widehat{\pi}_i^{-1}(U^\circ)
\longrightarrow U^\circ
\]
is smooth and projective for every \(i\). The smooth projective family \(\widehat{\pi}_i^\circ\) gives a weight one polarized variation of Hodge structures
\[
\mathbb V_i
:=
R^1(\widehat{\pi}_i^\circ)_*\mathbb C,
\]
with associated holomorphic bundle $\mathcal H_i
:=
\mathbb V_i\otimes_{\mathbb C}\mathcal O_{U^\circ}
$ and Gauss--Manin connection
\[
\nabla_i^{\mathrm{GM}}:
\mathcal H_i
\longrightarrow
\mathcal H_i\otimes\Omega^1_{U^\circ}.
\]

Set $\Sigma_{U,i}^\circ:=\Sigma_{U,i}\times_U U^\circ,\ 
q_i^\circ:=q_i|_{\widehat{\Sigma}_{U,i}^\circ},$ and define
\[
\widehat{\alpha}_i
:=
(q_i^\circ)^*\!\left(\alpha|_{\Sigma_{U,i}^\circ}\right)
\in
H^0\!\left(
\widehat{\Sigma}_{U,i}^\circ,
\Omega^1_{\widehat{\Sigma}_{U,i}^\circ/U^\circ}
\right).
\]

Since \(\widehat{\alpha}_i\) restricts to a holomorphic one-form on every fiber, it determines a section
\[
[\widehat{\alpha}_i]
\in
H^0(U^\circ,F^1\mathcal H_i)
\subset
H^0(U^\circ,\mathcal H_i).
\]

\begin{defi}[Flatness of the spectral one-form]\label{def_flat}
The relative spectral one-form \(\alpha\) is said to be \textbf{flat along the irreducible component} \(\Sigma_{U,i}\) if
\[
\nabla_i^{\mathrm{GM}}[\widehat{\alpha}_i]=0.
\]
We say that the spectral one-form \(\alpha\) is \textbf{flat} if it is flat along every irreducible component of
\((\Sigma_U)_{\mathrm{red}}\).
\end{defi}

\begin{rmk}
\begin{enumerate}
\item
The above definition is independent of the choice of the resolution
\(q_i:\widehat{\Sigma}_{U,i}\to\Sigma_{U,i}\).
Indeed, given two resolutions, after possibly shrinking \(U^\circ\),
they admit a common resolution over \(U^\circ\). The induced pullback
maps give isomorphisms of the corresponding \(R^1\)-local systems,
compatible with the Gauss--Manin connections. Since the pullbacks of
the spectral one-form coincide on the common resolution, the definition of flatness is independent of the chosen resolution.

\item
The holomorphicity assumption on the restriction of the section from
Setup~\ref{setup:isomonodromic} is essential for the above definition.
Indeed, without it the corresponding spectral data may vary only real
analytically, and the above PVHS formulation does
not directly apply.
\end{enumerate}
\end{rmk}

In this section, we aim to prove the following flatness theorem for the spectral one-form $\alpha$.
\begin{thm}\label{thm:spectral-flatness}
In the notation of Setup~\ref{setup:isomonodromic}, let $U\subset S$ be an
irreducible complex subvariety such that $\sigma_{\mathrm{Dol}}|_U$ is
holomorphic.  Then its spectral one-form $\alpha$ is flat in the sense of
Definition~\ref{def_flat}.
\end{thm}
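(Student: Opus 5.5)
The plan is to reduce flatness to a first-order statement at a general point of $U^\circ$, and then use the first-order identities of Theorem~\ref{thm:first-order-normalization} together with the absolute lift of Theorem~\ref{thm:absolute-lifting}.

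\textbf{Reduction.} Flatness of $[\widehat\alpha_i]$ is a pointwise condition on $U^\circ$. So we may shrink to a small contractible Stein neighborhood of a general point $u_0\in U^\circ$ and fix a holomorphic tangent vector $v$ there. Over such a neighborhood, for each component, the general fiber point of $\widehat\Sigma_{U,i}$ maps to a component of the spectral cover with constant multiplicity. After passing to the normalization $\nu:\widehat\Sigma_{U,i}\to X_U$, the pullback $\nu^*E_U$ acquires a generalized eigensheaf $E_i$ of generic rank $m_i$ on which $\nu^*\theta_U-\widehat\alpha_i\,\mathrm{id}$ is nilpotent. This is Zuo's construction.

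This gives the key identity
\[
\widehat\alpha_i=\tfrac1{m_i}\operatorname{tr}\bigl(\nu^*\theta_U|_{E_i}\bigr)
\]
on a dense open set of $\widehat\Sigma_{U,i}$. Since both sides are holomorphic forms, it holds on each fiber.

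\textbf{Absolute lift.} By Theorem~\ref{thm:absolute-lifting} (applied on the Stein neighborhood, or just its first-order form along $v$, if $U$ is singular at $u_0$ we use the curve germ), the relative Higgs field lifts to a closed absolute field $\widetilde\theta$ with $D_h\widetilde\theta=0$ and $\widetilde\theta\wedge\widetilde\theta=0$. Since $\widetilde\theta$ commutes with the vertical part, namely $[\widetilde\theta(V),\theta]=0$ as in the proof, the generalized eigensheaf $E_i$ is preserved by $\nu^*\widetilde\theta$.

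We then set
\[
\widetilde\alpha_i:=\tfrac1{m_i}\operatorname{tr}\bigl(\nu^*\widetilde\theta|_{E_i}\bigr).
\]
This is a holomorphic absolute one-form on the total space $\widehat\Sigma^\circ_{U,i}$ restricting fiberwise to $\widehat\alpha_i$. The Gauss--Manin derivative of $[\widehat\alpha_i]$ along $v$ is computed by choosing any lift $\widetilde\alpha$ and taking the class of $\iota_V d\widetilde\alpha$ restricted to the fiber. Hence it suffices to show that $d\widetilde\alpha_i=0$, or at least that its mixed vertical--horizontal component is exact on fibers.

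\textbf{Closedness (main obstacle).} We have $d\operatorname{tr}(\widetilde\theta|_{E_i})=\operatorname{tr}(D(\widetilde\theta|_{E_i}))$ for any connection $D$ on $E_i$. The difficulty is that $D_h$ does not preserve $E_i$.

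I would try to handle this by splitting $D_h$ into its $(1,0)$ and $(0,1)$ parts. The $(0,1)$ part poses no problem: $D_h^{0,1}$ is the holomorphic structure, which preserves $E_i$, and $D_h^{0,1}\widetilde\theta=0$. So $\bar\partial\widetilde\alpha_i=0$.

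For the $(1,0)$ part, I would use the projection $\pi_i$ onto $E_i$ along the other generalized eigensheaves, which is holomorphic and commutes with $\widetilde\theta$. This gives
\[
\partial\operatorname{tr}(\pi_i\widetilde\theta)=\operatorname{tr}\bigl((D^{1,0}_h\pi_i)\widetilde\theta\bigr)+\operatorname{tr}\bigl(\pi_iD_h^{1,0}\widetilde\theta\bigr).
\]
The second term vanishes. The first term vanishes because $D\pi_i$ is off-diagonal with respect to the eigen-splitting, while $\widetilde\theta$ is block-diagonal. Because generalized eigensheaves only exist generically, this computation is done on the dense open set and extended by continuity.

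With $d\widetilde\alpha_i=0$, the Gauss--Manin derivative vanishes, giving $\nabla^{\mathrm{GM}}_i[\widehat\alpha_i]=0$ on $U^\circ$ and hence the theorem. I expect the hardest steps to be the generic-to-global control of $E_i$ on the resolution and the singular-base reduction.
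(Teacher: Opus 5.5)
Your argument is correct in outline, but it is not the paper's route. It is essentially the alternative that the paper only sketches in the remark following the theorem.

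\textbf{How the paper argues.} The paper never uses the absolute lift. It fixes $0\in U^\circ$ and a disc tangent to $v$, and writes $\widehat\alpha_t=\frac1m\operatorname{tr}(P_t\widehat\theta_t)$ on an admissible open subset of the fiber. It then uses only the first-order identities of Theorem~\ref{thm:first-order-normalization}, namely $[\theta_0,g_1]=0$ and $\frac{d}{dt}\widehat\theta_t|_{t=0}=-\frac12D_{\widehat h_0}\widehat g_1$, together with the off-diagonality of $\dot P_0$. This gives $\frac{d}{dt}\widehat\alpha_t|_{t=0}=-\frac1{2m}d\operatorname{tr}_W(\widehat g_1)$, which is exact. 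Periods are evaluated on cycles pushed into the admissible locus, whose complement has real codimension at least two, and Cartan's formula absorbs the choice of trivialization.

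\textbf{How your route relates.} Your computation is the integrated form of the same identity. With the horizontal lift of Theorem~\ref{thm:absolute-lifting}, the horizontal component of $\widetilde\alpha_i$ at points over $X_0$ is $-\frac1{2m}\operatorname{tr}(\pi_i g_v)$. The mixed part of $d\widetilde\alpha_i=0$ is then exactly the paper's exactness formula, and your identity $\operatorname{tr}(D\pi_i\wedge\widetilde\theta)=0$ is the analogue of $\operatorname{tr}(\dot P_0\widehat\theta_0)=0$. Your $(1,0)/(0,1)$ splitting is unnecessary: $D_h\widetilde\theta=0$ and the off-diagonality of $D\pi_i$ give $d\widetilde\alpha_i=0$ at once.

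\textbf{Trade-offs.} Your route yields a stronger conclusion: a closed holomorphic absolute one-form on the resolved spectral component restricting to $\widehat\alpha_i$. The cost is the much heavier Theorem~\ref{thm:absolute-lifting}, whose identity $D_h\widetilde\theta=0$ on a neighborhood rests on the $\partial\bar\partial$-normalization of the metric over a Stein ball. That input is available here because $U^\circ\subset U_{\mathrm{reg}}$. So your aside about a singular $u_0$ is moot, and a first-order lift along a curve germ would not give closedness on a neighborhood anyway.

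\textbf{One step to tighten.} You extend $\widetilde\alpha_i$ off the locus where $E_i$ is a subbundle ``by continuity''; that needs an actual argument, but it is easy. The coefficients of $\widetilde\alpha_i$ against the pulled-back coordinate differentials $\nu^*dz_j,\nu^*dt_k$ are averages of eigenvalues of the components of $\widetilde\theta$, hence locally bounded. Riemann extension on the smooth $\widehat\Sigma$ then applies, and $d\widetilde\alpha_i=0$ and the fiberwise identification with $\widehat\alpha_i$ persist by density. Alternatively, use the paper's device of testing on cycles contained in the admissible locus, where closedness and Stokes already give constancy of the transported periods.
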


\begin{proof}[Proof of Theorem~\ref{thm:spectral-flatness}]
Since flatness is defined componentwise, it suffices to prove the statement
for an arbitrary irreducible component of $(\Sigma_U)_{\mathrm{red}}$.
Fix such a component $\Sigma$. For simplicity, we suppress the index $i$
from the notation and write
\[
q:\widehat{\Sigma}\longrightarrow\Sigma,
\qquad
\widehat{\pi}:\widehat{\Sigma}\longrightarrow U,
\qquad
\widehat{\alpha}
\]
for the corresponding resolution, induced morphism, and spectral one-form.

Fix any point in $U^\circ$, denoted by $0\in U^\circ$. Let
\[
p:T^*(X_U/U)\longrightarrow X_U
\]
be the natural projection and set $\widehat p:=p|_{\Sigma}\circ q:\widehat{\Sigma}\longrightarrow X_U.$ Denote by
\[
\widehat{\Sigma}_0:=\widehat{\pi}^{-1}(0),
\qquad
\widehat p_0:\widehat{\Sigma}_0\longrightarrow X_0
\]
the corresponding fiber and induced morphism.

We work on a dense analytic Zariski open subset $
\widehat{\Sigma}_0^{\mathrm{adm}}\subset\widehat{\Sigma}_0
$ on which \(q\) is an isomorphism and \(\widehat p_0\) is unramified. Set $\widehat E_0:=\widehat p_0^*E_0,$ and define
\[
\widehat\theta_0
:=\widehat p_0^*\theta_0\in H^0\bigl(\widehat{\Sigma}_0,\End(\widehat E_0)\otimes \Omega^1_{\widehat{\Sigma}_0} \bigr).
\]
We also denote $\widehat{\alpha}_0
:=
\widehat{\alpha}|_{\widehat{\Sigma}_0}.$

Fix \(\widehat\xi\in\widehat{\Sigma}_0^{\mathrm{adm}}\) and set $
x:=\widehat p_0(\widehat\xi)$ and $\xi:=q(\widehat\xi)\in T_x^*X_0.
$ Since \(\theta_0\wedge\theta_0=0\), for any $v\in T_{X_0,x},
$ the endomorphisms $
\theta_0(v)$ commute. Hence, for \(\ell\gg0\), the joint generalized eigenspace corresponding to \(\xi\) is
$$
W_{\widehat\xi}
:=
\bigcap_{v\in T_{X_0,x}}
\ker\!\left(
\theta_0(v)-\xi(v)\operatorname{id}
\right)^\ell.
$$
After removing a proper analytic subset from
\(\widehat{\Sigma}_0^{\mathrm{adm}}\), and retaining the same notation for the resulting dense open subset, we may assume that these spaces $W_{\widehat\xi}$ have locally constant dimension and therefore glue to a holomorphic subbundle $
W\subset\widehat E_0.$

Let $W'$ be the sum of the remaining blocks. Then $\widehat E_0=W\oplus W'$ on \(\widehat{\Sigma}_0^{\mathrm{adm}}\). Let
\[
P\in\mathcal End(\widehat E_0)
\]
be the corresponding holomorphic projection bundle map of $W$, so that
\[
P^2=P,
\qquad
\operatorname{Im}P=W,
\qquad
\ker P=W'.
\]

By the definition of the generalized eigenspace,
\[
\widehat\theta_0|_W
=
\widehat{\alpha}_0\,\operatorname{id}_W+N,
\]
where $N$ is an $\operatorname{End}(W)$-valued nilpotent one-form. Hence $\operatorname{tr}_W(N)=0.$ Taking traces gives
\[
\operatorname{tr}_W(\widehat\theta_0)
=
m\,\widehat{\alpha}_0,\qquad\text{ where } m:=\operatorname{rank}W.
\]
By definition, we have $\operatorname{tr}(P\widehat\theta_0)
=
\operatorname{tr}_W(\widehat\theta_0).$ Therefore
\begin{equation}\label{eq:normalized-trace-spectral-form}
\widehat{\alpha}_0
=
\frac{1}{m}\operatorname{tr}(P\widehat\theta_0)
=
\frac{1}{m}\operatorname{tr}_W(\widehat\theta_0).
\end{equation}

Let $v\in T^{1,0}_0U^\circ$ and choose a holomorphic disc through $0$
tangent to $v$, with coordinate $t$.  The above argument holds for $X_t$ over $t$ similarly. Denote the
resulting projector by $P_t$ and the pulled-back first harmonic metric variation $g_1$
by $\widehat g_1$.  By 
\eqref{eq:g1-holomorphicity} and \eqref{eq:first-order-higgs}, we have
\begin{equation}\label{eq:variation-pulledback-higgs}
\left.\frac{d\widehat\theta_t}{dt}\right|_{t=0}
=-\frac12D_{\widehat h_0}\widehat g_1,
\end{equation}
where $D_{\widehat h_0}$ is the pullback Chern connection.

Differentiating $P_t^2=P_t$ at $t=0$ gives
\[
P\dot P_0P=0,
\qquad
(1-P)\dot P_0(1-P)=0.
\]
Thus $\dot P_0$ is off diagonal, whereas $\widehat\theta_0$ is block
diagonal.  Consequently
\[
\operatorname{tr}(\dot P_0\widehat\theta_0)=0.
\]
Differentiating \eqref{eq:normalized-trace-spectral-form} and using
\eqref{eq:variation-pulledback-higgs}, we obtain on the admissible locus
\begin{equation}\label{eq:variation-spectral-form}
\left.\frac{d\widehat\alpha_t}{dt}\right|_{t=0}
=-\frac{1}{2m}
\operatorname{tr}_W(D_{\widehat h_0}\widehat g_1).
\end{equation}
  By \eqref{eq:g1-holomorphicity}, we have $[\widehat\theta_0,\widehat g_1]=0$, so
$\widehat g_1$ preserves $W$, commutes with $P$, and is block diagonal
with respect to $\widehat E_0=W\oplus W'$. Hence
\[
\operatorname{tr}_W(D_{\widehat h_0}\widehat g_1)
=d\operatorname{tr}_W(\widehat g_1).
\]
Because $P$ is single-valued on the admissible locus,
$\operatorname{tr}_W(\widehat g_1)=\operatorname{tr}(P\widehat g_1)$
is a single-valued smooth function there.  Equation
\eqref{eq:variation-spectral-form} is therefore exact.

Let $[\gamma]\in H_1(\widehat\Sigma_0,\mathbb Z)$.  The complement of
$\widehat\Sigma_0^{\mathrm{adm}}$ has positive complex codimension, so
$[\gamma]$ has a smooth representative contained in the admissible
locus.  Transport it by a smooth horizontal trivialization to cycles
$\gamma_t$.  Then
\[
\left.\frac{d}{dt}\right|_{t=0}
\int_{\gamma_t}\widehat\alpha_t
=-\frac{1}{2m}\int_\gamma
d\operatorname{tr}_W(\widehat g_1)=0.
\]
Changing the horizontal trivialization changes the ordinary derivative
by a Lie derivative.  Since $\widehat\alpha_0$ is a holomorphic one-form
on the smooth projective fiber, it is closed, and Cartan's formula shows
that this change is again exact.  Thus every Gauss--Manin transported
period has zero derivative in the direction $v$.

The point $0\in U^\circ$, the tangent direction $v$, and the irreducible
component $\Sigma$ were arbitrary.  Hence $\nabla^{\mathrm{GM}}[\widehat\alpha_i]=0$ for every component, proving the theorem.
\end{proof}
\begin{rmk}
Locally on \(U\), the conclusion of Theorem~\ref{thm:spectral-flatness} admits a stronger interpretation.  Indeed, over a small contractible neighborhood \(B\subset U\), Theorem~\ref{thm:absolute-lifting} provides an absolute lift of the relative Higgs bundle.  The spectral one-form $\widetilde\alpha$ of $\widetilde\theta$ is also globally defined. By a similar argument as in the proof of Theorem~\ref{thm:spectral-flatness}, we may prove that $\widetilde\alpha$ is closed. Moreover, the relative restriction of \(\widetilde\alpha\) is precisely \(\widehat\alpha\), which gives another explanation for its Gauss--Manin flatness.
\end{rmk}

\section{Deformation rigidity of nilpotent Higgs bundles}
\label{sec:nilpotent-rigidity}
We now apply the flatness theorem to the nilpotent locus.  The main
statement of this section is the following.

\begin{thm}
\label{thm:nilpotence-rigidity}
Under
 Setup~\ref{setup:isomonodromic}, let $(E_0,\theta_0)$ represent
$\sigma_{\mathrm{Dol}}(0)$.  Let $U$ be an irreducible complex analytic subvariety through
$0\in S$.  Assume that
$\sigma_{\mathrm{Dol}}|_U$ is holomorphic.  If $\theta_0$ is nilpotent, then
$\sigma_{\mathrm{Dol}}(u)$ is represented by a nilpotent Higgs bundle for
every $u\in U$.
\end{thm}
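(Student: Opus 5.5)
The plan is to use Theorem~\ref{thm:spectral-flatness} to show that every component of the relative spectral scheme must lie in the zero section. The spectral one-form at points of a fiber where $\theta_0$ is nilpotent is the tautological form restricted to the zero section, hence vanishes. Flatness should then force it to vanish on all nearby fibers.

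\textbf{Step 1: reduce to a neighbourhood of $0$ and to $U^\circ$.} Nilpotency of $\theta_u$ is equivalent to the vanishing of the characteristic coefficients of $\theta_u$. These are holomorphic sections over $U$ of $\mathrm{Sym}^k\Omega^1_{X_U/U}$, since $\sigma_{\mathrm{Dol}}|_U$ is holomorphic. Their vanishing locus is therefore an analytic subset of the irreducible $U$. It suffices to show that it contains a nonempty open subset of $U$; irreducibility then gives all of $U$. Since $0$ may lie in the singular locus of $U$, or outside the good open set $U^\circ$ of Definition~\ref{def_flat}, I will work with a small connected neighbourhood $B$ of $0$ in $U$ and use semicontinuity/properness. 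The spectral scheme $\Sigma_U$ is proper and finite over $X_U$. Hence, for $u$ near $0$, $\Sigma_u$ lies in a small tubular neighbourhood of $\Sigma_0$, and $\Sigma_0$ is contained in the zero section of $T^*X_0$.

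\textbf{Step 2: flatness plus a limit argument.} Fix an irreducible component $\Sigma_{U,i}$ of $(\Sigma_U)_{\mathrm{red}}$ that meets the fiber over $0$. On $U^\circ$, Theorem~\ref{thm:spectral-flatness} says that $[\widehat\alpha_i]$ is a flat section of $\mathcal H_i$. In particular its periods $\int_{\gamma_u}\widehat\alpha_{i,u}$ over Gauss--Manin transported cycles are locally constant on $U^\circ\cap B$. I would show that these periods tend to $0$ as $u\to 0$. Since $\widehat\alpha_i$ is the pullback of the tautological form $\lambda_{\mathrm{can}}$, it is uniformly bounded near the zero section, and its sup norm tends to $0$ as $\Sigma_u$ collapses onto the zero section. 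Cycles can be chosen with uniformly bounded length in the total space of the resolution, for instance via a fixed smooth trivialization near a chosen point of $U^\circ$ together with a path to $0$.

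\textbf{Main obstacle.} Controlling this degeneration is the delicate step. The resolution $\widehat\Sigma_{U,i}$ may have singular fibers over $U\setminus U^\circ$, so the cycles need not converge nicely. To avoid this, I would argue with currents instead of cycles. For a general smooth curve $C\subset B$ through a point of $U^\circ$ and approaching $0$, flatness means the cohomology class of $\widehat\alpha_{i,u}$ is constant along $C\cap U^\circ$. It therefore suffices to find a single fiber near $0$ where the class is small. Since the class is a holomorphic one-form, its $L^2$-norm $\int \widehat\alpha\wedge\overline{\widehat\alpha}$ is determined by the class and is constant on the connected set $C\cap U^\circ$. This norm is continuous in $u$ up to $0$: it is an integral of a bounded form over $\Sigma_{U,i}$ restricted to fibers, and that form tends to $0$ uniformly. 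Hence the norm is $0$ identically.

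\textbf{Step 3: conclude nilpotency.} Step 2 gives $\widehat\alpha_{i,u}=0$ for $u\in U^\circ\cap B$. So $\lambda_{\mathrm{can}}$ restricts to zero on every component of $\Sigma_u$. A point $\xi\in T^*_xX_u$ on $\Sigma_u$ with $\alpha=0$ need not be $0$ a priori; the restriction only vanishes on the tangent directions of $\Sigma_u$. However, $\Sigma_u\to X_u$ is finite and surjective on each component, so the spectral form recovers $\xi$ itself generically, as in \eqref{eq:normalized-trace-spectral-form}. Hence every point of $\Sigma_u$ lies in the zero section, all eigenvalues of $\theta_u$ vanish, and $\theta_u$ is nilpotent. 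This establishes nilpotency on a nonempty open set, and Step 1 extends it to all of $U$.
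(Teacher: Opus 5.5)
Your proposal is correct and is essentially the paper's argument: Theorem~\ref{thm:spectral-flatness} makes the Hodge norm $\sqrt{-1}\int_{Z_t}\widehat\alpha_t\wedge\overline{\widehat\alpha_t}\wedge\omega_t^{d-1}$ locally constant over the smooth locus, its limit at the nilpotent fiber is zero, hence $\widehat\alpha_t=0$, and Lemma~\ref{lem:spectral-criterion} together with the analyticity of the nilpotent locus finishes the proof, exactly as in Lemma~\ref{lem:hodge-norm-specialization} and the proof of Theorem~\ref{thm:nilpotence-rigidity}. The only differences are minor: you get the vanishing limit from the pointwise decay of the lifted tautological form near the zero section instead of from Barlet's continuity of fiber cycles, and you globalize by the identity principle on the irreducible $U$ instead of the open--closed argument. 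For the first, you should add the missing factor $\omega^{d-1}$ (with $\omega$ closed on the total space) and the cohomological bound on fiber volumes of a projective resolution.
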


We shall use the following nilpotent criteria to prove Theorem~\ref{thm:nilpotence-rigidity} together with Theorem~\ref{thm:spectral-flatness}.

\begin{lem}\label{lem:spectral-criterion}
Let $(E,\theta)$ be an integrable Higgs bundle on a smooth complex
manifold $Y$.  The following conditions are equivalent:
\begin{enumerate}
\item $\theta$ is nilpotent;
\item the spectral one-form vanishes on the resolution of every reduced spectral component;
\item all Hitchin invariant forms vanish.
\end{enumerate}
\end{lem}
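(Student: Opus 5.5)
I will prove the cycle (1)$\Rightarrow$(2)$\Rightarrow$(3)$\Rightarrow$(1). The guiding principle is that all three conditions are statements about the eigenvalues of the commuting family $\{\theta(v)\}_{v\in T_{Y,y}}$ at a general point $y\in Y$. By the integrability $\theta\wedge\theta=0$, these endomorphisms are simultaneously triangularizable at each point. Their joint eigenvalues are covectors $\xi\in T^*_yY$, and these are exactly the points of the fiber of the spectral scheme $\Sigma\subset T^*Y$ over $y$. By the Cayley--Hamilton theorem for the commutative algebra generated by $\theta$, the reduced spectral scheme agrees with the support of the characteristic variety.

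\emph{(1)$\Rightarrow$(2).} If $\theta$ is nilpotent, every joint eigenvalue is $0$. Then $(\Sigma)_{\mathrm{red}}$ is the zero section $Y\subset T^*Y$, and the tautological form $\lambda_{\mathrm{can}}$ restricts to zero on the zero section. Its pullback to the resolution, which is just $Y$ itself, therefore vanishes.

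\emph{(2)$\Rightarrow$(3).} The Hitchin invariant forms are the coefficients $s_k\in H^0(Y,\operatorname{Sym}^k\Omega^1_Y)$ of the characteristic polynomial $\det(\lambda-\theta)$. Equivalently, they are the elementary symmetric functions, counted with multiplicity, of the joint eigenvalues. I will use the local description from the proof of Theorem~\ref{thm:spectral-flatness}. On a dense Zariski open subset of $Y$ the spectral cover is unramified over each component, and $E$ splits into generalized joint eigenbundles $W_j$ on which $\theta|_{W_j}=\xi_j\operatorname{id}+N_j$ with $N_j$ nilpotent. Here $\xi_j$ is exactly the spectral one-form $\widehat\alpha$ evaluated on the corresponding sheet. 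Hypothesis (2) says $\widehat\alpha=0$ on each resolved component, so every $\xi_j=0$ on this dense open set. Hence each $s_k$, being a symmetric polynomial in the $\xi_j$, vanishes on a dense open subset of $Y$. Since $s_k$ is holomorphic, it vanishes identically.

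\emph{(3)$\Rightarrow$(1).} If all $s_k=0$, then for each $v$ the characteristic polynomial of $\theta(v)$ is $\lambda^r$, where $r=\operatorname{rank}E$. By Cayley--Hamilton, $\theta(v)^r=0$ for every $v$. By simultaneous triangularization, all products $\theta(v_1)\cdots\theta(v_r)$ vanish, since the commuting family is strictly upper triangular in a common basis. Thus $\theta$ is nilpotent.

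The only step needing care is (2)$\Rightarrow$(3): identifying the spectral one-form on each resolved component with the eigenvalue one-form on the corresponding sheet, and passing from the dense admissible locus to all of $Y$. The first point is handled by the trace formula \eqref{eq:normalized-trace-spectral-form}. The second follows from the identity theorem for holomorphic sections and the fact that the resolution is an isomorphism over a dense open subset. Everything else is pointwise linear algebra.
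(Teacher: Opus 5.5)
Your proof is correct. The paper states this lemma without proof and treats it as standard, so there is no proof of its own to compare against. Your argument fills the gap with the same tools the paper uses in the proof of Theorem~\ref{thm:spectral-flatness}:
\begin{itemize}
\item the joint generalized eigenspace splitting $\theta|_{W}=\xi\,\operatorname{id}+N$ on a dense admissible locus;
\item the identification of the spectral one-form on each sheet with the eigenvalue one-form $\xi$;
\item Cayley--Hamilton together with simultaneous triangularization for (3)$\Rightarrow$(1).
\end{itemize}

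You state one point tersely: that $\Sigma_{\mathrm{red}}$ is exactly the set of joint eigenvalues. It splits into two easy inclusions, each serving one implication.
\begin{itemize}
\item Every joint eigenvalue $(y,\mu)$ lies on $\Sigma$. Take any $P$ in the defining ideal and a joint eigenvector $e$; then $0=P(\theta)e=P(y,\mu)e$. This inclusion is all that (2)$\Rightarrow$(3) needs.
\item If $\theta$ is nilpotent, then $v^r$ lies in the ideal for every local vector field $v$. This forces $\Sigma_{\mathrm{red}}$ to be the zero section, which is what (1)$\Rightarrow$(2) needs.
\end{itemize}
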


We first establish a technical lemma showing that a Gauss--Manin flat family of holomorphic one-forms vanishes identically if it vanishes on one fiber.

\begin{lem}\label{lem:hodge-norm-specialization}
Let $q:Z\to\Delta$ be a projective holomorphic map with $Z$ smooth and
with every irreducible component dominating the disc $\Delta$.  Assume that $q$
is smooth over $\Delta^*$.  Let
$\alpha\in H^0(Z,\Omega^1_{Z/\Delta})$, and suppose that there is a
smooth absolute one-form $A\in\mathcal A^1(Z)$ whose restriction to
every fiber is $\alpha_t$.  If
\[
\nabla^{\mathrm{GM}}[\alpha_t]=0\quad(t\ne0),
\qquad
\alpha|_{(Z_0)_{\mathrm{red}}}=0,
\]
then, after shrinking $\Delta$, one has $\alpha_t=0$ for every
$t\in\Delta^*$.
\end{lem}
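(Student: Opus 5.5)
The plan is to use the Hodge norm of the flat class. Over $\Delta^*$ the classes $[\alpha_t]$ form a flat section of $\mathcal H=R^1q_*\mathbb C\otimes\mathcal O_{\Delta^*}$ that lies in $F^1$. Since $\alpha_t$ is holomorphic, its Hodge norm is
\[
\|[\alpha_t]\|^2=\sqrt{-1}\int_{Z_t}\alpha_t\wedge\bar\alpha_t\wedge\omega^{d-1}
\]
with $\omega$ a relative Kähler form from the projective embedding and $d=\dim Z_t$. Up to a positive normalizing constant, this is the $L^2$ norm of $\alpha_t$. I would prove two things. First, $\|[\alpha_t]\|\to 0$ as $t\to0$. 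Second, a flat section of a polarized VHS whose Hodge norm tends to $0$ at the puncture is identically zero. Together these give $[\alpha_t]=0$, and since a holomorphic one-form on a compact Kähler fiber is determined by its cohomology class, $\alpha_t=0$ for all $t\in\Delta^*$.

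For the first claim, I would use the smooth absolute form $A$. The integral $\int_{Z_t}A\wedge\bar A\wedge\omega^{d-1}$ is the fiber integral $q_*(A\wedge\bar A\wedge\omega^{d-1})$ of a smooth form along a proper map. It is therefore continuous in $t$ as a current on $\Delta$, and it converges as $t\to0$ to the integral over the cycle $Z_0=\sum m_j Z_{0,j}$. On each component $Z_{0,j}$, the restriction of $A$ to the smooth locus of $(Z_{0,j})_{\mathrm{red}}$ agrees with $\alpha|_{(Z_0)_{\mathrm{red}}}$ modulo $q^*dt$. This is where the convergence argument is delicate: I would treat the $dt$-components by noting they pair to zero against the fiber integration, or remove them by restricting $A$ to tangent vectors of fibers. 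Granting this, the limit vanishes by hypothesis. I expect the continuity of fiber integrals across the singular fiber to be routine, using Stokes or the fact that $q_*$ of a smooth form is continuous for flat proper maps. The dominance hypothesis ensures that $q$ is flat, so that the fibers form a continuous family of cycles.

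The second claim is where I expect the main difficulty. After a finite base change $t=s^k$, I may assume the monodromy $T$ is unipotent. A flat multivalued section becomes single-valued, since Gauss--Manin flatness with $F^1$-membership on the whole punctured disc forces the section to be $T$-invariant up to the base change. By the theorem of the fixed part, or directly by Schmid's nilpotent orbit theorem, a $T$-invariant flat section $v$ lies in $\ker N$. Its Hodge norm is then asymptotic to $C|\log|t||^{k}$ for some $k\ge 0$ with $C>0$ whenever $v\neq 0$, using Schmid's norm estimates along the weight filtration $W(N)$. Since $\ker N\subset W_0$ relative to the centered filtration, the exponent is $k\ge0$, so the norm is bounded below near $0$ by a positive constant whenever $v\ne0$. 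This contradicts $\|[\alpha_t]\|\to0$. Hence $v=0$, and so $\alpha_t=0$ after shrinking $\Delta$.

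If the invariance step is problematic, the fallback is Deligne's theorem of the fixed part applied to $[\alpha_t]$, viewed as a global flat section over a compactifiable base. Alternatively, one can use the elementary fact that the norm of a flat section is bounded below near $0$, which follows from the norm estimates.
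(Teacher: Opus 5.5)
\textbf{There is a genuine gap in your second step.} Your first step is the same as the paper's. The quantity $I(t)=\sqrt{-1}\int_{Z_t}\alpha_t\wedge\overline{\alpha_t}\wedge\omega^{d-1}$ tends to $0$ because $q$ is flat, the fiber cycles vary continuously, and $A$ vanishes on tangent directions of every component of $(Z_0)_{\mathrm{red}}$. The $dt$- and $d\bar t$-components of $A$ that worry you are harmless, since $q^*dt$ and $q^*d\bar t$ vanish on vectors tangent to $(Z_0)_{\mathrm{red}}$.

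The gap is the claim that a flat section of a polarized VHS whose Hodge norm tends to $0$ must vanish. This is false, even for single-valued sections. Take a degenerating family of elliptic curves $\mathbb C/(\mathbb Z+\tau\mathbb Z)$ with $t=e^{2\pi i\tau}$, and let $a$ be the vanishing cycle. The class $v$ with $v(a)=0$, $v(b)=1$ is monodromy-invariant and flat, yet $\|v\|^2\asymp 1/\operatorname{Im}\tau\asymp 1/|\log|t||\to 0$.

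Your use of Schmid's estimates has the inequality reversed. The inclusion $\ker N\subset W_0$ for the centered weight filtration gives $\|v\|^2\asymp|\log|t||^{\ell}$ with $\ell\le 0$. So invariant flat sections have norm bounded \emph{above}, not below, and the ``elementary fact'' in your fallback fails for the same reason. Separately, the invariance of $[\alpha_t]$ holds simply because it is a single-valued section ($\alpha$ is defined on all of $Z$); it has nothing to do with $F^1$-membership.

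\textbf{The missing idea is to use that the flat class stays of pure type $(1,0)$.} For such a class the Hodge norm is a topological pairing:
\[
I(t)=\sqrt{-1}\,\bigl\langle[\alpha_t]\cup\overline{[\alpha_t]}\cup[\omega_t]^{d-1},[Z_t]\bigr\rangle .
\]
Each factor is Gauss--Manin flat. The class $\overline{[\alpha_t]}$ is flat because $R^1q_*\mathbb C$ has a real structure, and $[\omega_t]$ is flat because $\omega$ is a closed form on $Z$. Hence $I(t)$ is constant on $\Delta^*$, so it equals its limit $0$, and K\"ahler positivity gives $\alpha_t=0$.

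This is exactly the paper's proof, and it needs no asymptotic Hodge theory. If you want to keep the Schmid route, you must feed in $F^1$-membership in an essential way, for instance through the limiting mixed Hodge structure to rule out $v\in W_{-1}$. Your proposal does not do this.
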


\begin{proof}
Since $Z$ is smooth and no irreducible component is contained in $Z_0$,
the parameter $t$ is a non-zero divisor in $\mathcal O_Z$.  Thus $q$ is
flat over $\Delta$.  The fibers consequently form an
analytic family of compact cycles, and their integration currents vary
continuously \cite{Barlet}.

Choose a smooth \(d\)-closed real \((1,1)\)-form \(\omega\) on \(Z\) that is relatively Kähler over \(\Delta\); that is, its restriction $
\omega_t:=\omega|_{Z_t}
$ is a Kähler form on every nearby smooth fiber \(Z_t\). Put $d=\dim_{\mathbb C}Z_t$.  For $t\ne0$
define
\[
I(t):=\sqrt{-1}\int_{Z_t}
\alpha_t\wedge\overline{\alpha_t}\wedge\omega_t^{d-1}.
\]
The classes $[\alpha_t]$, $[\overline{\alpha_t}]$, and $[\omega_t]$
are Gauss--Manin flat, so $I(t)$ is locally constant on the connected
punctured disc. Set
\[
\beta:=\sqrt{-1}A\wedge\overline A\wedge\omega^{d-1}.
\]
The central vanishing hypothesis says that $A$ vanishes on tangent
directions at every smooth point of every irreducible component of
$(Z_0)_{\mathrm{red}}$.  Hence
$\int_{[Z_0]}\beta=0$.  Continuity of the fiber cycles gives
\[
\lim_{t\to0}I(t)
=\lim_{t\to0}\int_{[Z_t]}\beta
=\int_{[Z_0]}\beta=0.
\]
Therefore $I(t)=0$ for $t\ne0$.  K\"ahler positivity for a holomorphic
one-form implies $\alpha_t=0$ on every smooth nearby fiber.
\end{proof}

\begin{rmk}\label{rmk:absolute-lift-spectral-form}
The absolute-lift hypothesis in
Lemma~\ref{lem:hodge-norm-specialization} is automatic for the relative spectral one-form defined in Definition~\ref{def_relative_s_oneform}.  Choose a smooth horizontal splitting for the smooth
family $X_\Delta/\Delta$, lift the tautological one-form on
$T^*(X_\Delta/\Delta)$ to a smooth absolute one-form, and pull it back to the
resolved spectral component.
\end{rmk}

\begin{proof}[Proof of Theorem~\ref{thm:nilpotence-rigidity}]
Let
\[
N:=\{u\in U\mid \sigma_{\mathrm{Dol}}(u)
\text{ is represented by a nilpotent Higgs bundle}\}.
\]
By Lemma~\ref{lem:spectral-criterion}, nilpotence is equivalent to the
vanishing of all positive-degree Hitchin invariants. Since the Hitchin map
and $\sigma_{\mathrm{Dol}}|_U$ are holomorphic, $N$ is a closed analytic
subset of $U$.

We claim that $N$ is open. Suppose otherwise, and let $p\in N$ be a point
at which $N$ is not a neighborhood. By complex-analytic curve selection
\cite{Chirka}, after shrinking around $p$ there exists a holomorphic disc
\[
g:(\Delta,0)\longrightarrow(U,p)
\]
such that $g(t)\notin N$ for every sufficiently small $t\ne0$. Pull back
the smooth projective family and the isomonodromic section along $g$. Since
$\Delta$ is simply connected, the fixed semisimple isomonodromic local
system admits the decomposition
\eqref{eq_semisimple_decomp}--\eqref{eq_polys_decomp} on the disc. By
Proposition~\ref{prop:app-polystable-reduction}, after shrinking $\Delta$,
each stable factor defines a holomorphic isomonodromic Higgs family.

Fix one stable factor and a reduced irreducible component $\Sigma$ of its
relative spectral scheme which dominates $\Delta$. Choose a projective resolution $q:\widehat \Sigma\longrightarrow\Sigma.$ After shrinking $\Delta$, the composed map
$\widehat \Sigma\to\Delta$ is smooth over $\Delta^*$. Let $\widehat\alpha$ be
the pullback of the spectral one-form. For $t\ne0$,
Theorem~\ref{thm:spectral-flatness} gives the Gauss-Manin flatness of $[\widehat\alpha_t]$. Since the central Higgs field of the stable factor is nilpotent,
Lemma~\ref{lem:spectral-criterion} gives
\[
\widehat\alpha|_{(\widehat \Sigma_0)_{\mathrm{red}}}=0.
\]
The smooth absolute-lift hypothesis is automatic by
Remark~\ref{rmk:absolute-lift-spectral-form}. Therefore
Lemma~\ref{lem:hodge-norm-specialization} yields
\[
\widehat\alpha_t=0
\]
for every sufficiently small $t$. Applying
Lemma~\ref{lem:spectral-criterion} again, the stable factor remains
nilpotent near $0$. The same argument applies to every stable factor, and
\eqref{eq_polys_decomp} then shows that the whole Higgs field is nilpotent
for every sufficiently small $t$. This contradicts the choice of $g$.
Thus $N$ is open.

Since $N$ is both open and closed in $U$ and
$0\in N$, we conclude that $N=U$.
\end{proof}

\appendix
\section{Polystable reduction for holomorphic isomonodromic families}
\label{app:polystable-reduction}

In this appendix, we work on a simply connected analytic base
$U$. Let $p:X_U\longrightarrow U$ be a smooth projective family, and let $(V,\nabla)$ be a semisimple
isomonodromic family.  We use
throughout the convention in
\eqref{eq_semisimple_decomp}--\eqref{eq_polys_decomp}.  Thus the
$(V_i,\nabla_i)$ are pairwise nonisomorphic irreducible flat families and the stable factors $(E_{i,u},\theta_{i,u})$ are
pairwise nonisomorphic.  We denote the corresponding real
analytic isomonodromic Higgs families by $(E_U,\theta_U)$ and
$(E_{i,U},\theta_{i,U})$. We aim to show that the isomonodromic family $(E_U,\theta_U)$ is holomorphic if and only if each factor $(E_{i,U},\theta_{i,U})$ is a holomorphic family. We give two lemmas before proving this proposition.

\medskip

For a Higgs bundle $(F,\vartheta)$ on a smooth projective variety $Y$, write
\[
C^\bullet(F,\vartheta)
:=
\bigl(
\operatorname{End}F
\xrightarrow{\operatorname{ad}(\vartheta)}
\operatorname{End}F\otimes\Omega_Y^1
\xrightarrow{\operatorname{ad}(\vartheta)}\cdots
\bigr)
\]
for its Higgs deformation complex.  Fix $u\in U$ and abbreviate
\[
C_u^\bullet:=C^\bullet(E_u,\theta_u),
\qquad
C_{i,u}^\bullet:=C^\bullet(E_{i,u},\theta_{i,u}).
\]
The decomposition \eqref{eq_polys_decomp} gives
\begin{equation*}
\operatorname{End}E_u
=
\bigoplus_{i,j}
\operatorname{Hom}(E_{j,u},E_{i,u})
\otimes\operatorname{Hom}(M_j,M_i),
\end{equation*}
and the differential $\operatorname{ad}(\theta_u)$ preserves every
$(i,j)$-block.

Let $
m_i:=\dim M_i,$ and let $\delta_{ij}$ denote the Kronecker delta, namely, $
\delta_{ij}=
\begin{cases}
1, & i=j,\\
0, & i\neq j.
\end{cases}
$
\begin{lem}
\label{lem:app-isotypic-projection}

For every $i$, there are natural morphisms of complexes
$$
\jmath_i:C_{i,u}^\bullet\longrightarrow C_u^\bullet,
\qquad
\mathsf p_i:C_u^\bullet\longrightarrow C_{i,u}^\bullet,
$$
such that $
\mathsf p_i\circ\jmath_j=\delta_{ij}\operatorname{id}.
$

Explicitly, $\jmath_i$ is the inclusion into the $(i,i)$-block given by $
a\longmapsto a\otimes\operatorname{id}_{M_i},
$
whereas $\mathsf p_i$ vanishes on all other blocks and, on the $(i,i)$-block, is given by
$$
\mathsf p_i
=
\operatorname{id}\otimes
\frac{1}{m_i}\operatorname{tr}_{M_i}.
$$
Consequently, $\jmath_i$ and $\mathsf p_i$ induce split morphisms on hypercohomology.
\end{lem}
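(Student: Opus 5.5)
The plan is to verify everything directly in the block decomposition of $\operatorname{End}E_u$ induced by \eqref{eq_polys_decomp}, since both maps are defined blockwise and the differentials of $C_u^\bullet$ preserve every $(i,j)$-block.

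\textbf{$\jmath_i$ is a chain map.} On the $(i,i)$-block the Higgs field acts as $\theta_u|=\theta_{i,u}\otimes\operatorname{id}_{M_i}$. Hence
\[
\operatorname{ad}(\theta_u)(a\otimes\operatorname{id}_{M_i})
=[\theta_{i,u},a]\otimes\operatorname{id}_{M_i}
=\jmath_i(\operatorname{ad}(\theta_{i,u})a),
\]
where the form part of $a$ is carried along and the Koszul signs are the same on both sides.

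\textbf{$\mathsf p_i$ is a chain map.} Since $\operatorname{ad}(\theta_u)$ preserves blocks, it suffices to check the $(i,i)$-block $\operatorname{End}(E_{i,u})\otimes\operatorname{End}(M_i)\otimes\Omega^\bullet$. There
\[
\operatorname{ad}(\theta_u)(a\otimes b)=[\theta_{i,u},a]\otimes b,
\]
because $\theta_u$ acts trivially on the multiplicity factor: $(\theta_{i,u}\otimes 1)(a\otimes b)-(a\otimes b)(\theta_{i,u}\otimes 1)=[\theta_{i,u},a]\otimes b$, with graded signs as before. Applying $\operatorname{id}\otimes\frac1{m_i}\operatorname{tr}_{M_i}$ then commutes with $\operatorname{ad}$. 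The zero map on the other blocks is trivially compatible, since those blocks are preserved by the differential.

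\textbf{The relation $\mathsf p_i\circ\jmath_j=\delta_{ij}\operatorname{id}$.} If $i\neq j$, then $\jmath_j$ lands in the $(j,j)$-block, on which $\mathsf p_i$ vanishes. If $i=j$,
\[
\mathsf p_i(a\otimes\operatorname{id}_{M_i})=a\cdot\tfrac1{m_i}\operatorname{tr}(\operatorname{id}_{M_i})=a.
\]

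\textbf{Hypercohomology.} Both $\jmath_i$ and $\mathsf p_i$ are morphisms of complexes of coherent sheaves, being $\mathcal O$-linear in each degree. They therefore induce maps on $\mathbb H^*$, and functoriality gives $\mathsf p_{i*}\jmath_{i*}=\operatorname{id}$. Thus $\jmath_{i*}$ is split injective and $\mathsf p_{i*}$ is split surjective.

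The only point needing care is the sign and ordering convention for $\operatorname{ad}$ on form-valued endomorphisms tensored with $\operatorname{End}(M_i)$. Since $M_i$ sits in degree zero and $\theta$ acts as the identity on it, no obstacle is expected there.
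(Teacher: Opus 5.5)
Your proposal is correct and follows essentially the same route as the paper: a blockwise check using that $\operatorname{ad}(\theta_u)$ preserves every block and restricts on the $(i,i)$-block to $\operatorname{ad}(\theta_{i,u})\otimes\operatorname{id}_{\operatorname{End}M_i}$, followed by $\operatorname{tr}(\operatorname{id}_{M_i})=m_i$ for the relation $\mathsf p_i\circ\jmath_j=\delta_{ij}\operatorname{id}$. You additionally spell out the compatibility on the blocks where $\mathsf p_i$ vanishes and the functoriality argument on hypercohomology, both of which the paper leaves implicit.
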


\begin{proof}
It is enough to check compatibility with the Higgs differential.  On the
$(i,i)$-block one has $\operatorname{ad}(\theta_u)
=
\operatorname{ad}(\theta_{i,u})\otimes\operatorname{id}_{\operatorname{End}M_i}$ because $\theta_u$ restricts there to
$\theta_{i,u}\otimes\operatorname{id}_{M_i}$.  Therefore
\[
\mathsf p_i\bigl([\theta_u,A]\bigr)
=
[\theta_{i,u},\mathsf p_i(A)].
\]
The same calculation shows that $\jmath_i$ is a chain map.  The identity
$\mathsf p_i\circ\jmath_j=\delta_{ij}\operatorname{id}$ follows from
$\operatorname{tr}(\operatorname{id}_{M_i})=m_i$.
\end{proof}

Let $\eta\in\mathcal A^{0,1}(T_Y)$ represent a first-order deformation of the
complex structure of $Y$.  The Higgs field defines the standard morphism of
complexes
\[
(T_Y,0)\longrightarrow C^\bullet(F,\vartheta),
\]
and hence a map
\[
\vartheta_*:H^1(Y,T_Y)
\longrightarrow
\mathbb H^1\bigl(Y,C^\bullet(F,\vartheta)\bigr).
\]
In the Dolbeault resolution its value is represented by
\begin{equation}\label{eq:app-theta-obstruction}
\vartheta_*([\eta])
=
\bigl[(0,\eta\lrcorner\vartheta)\bigr].
\end{equation}
This is the first-order class appearing in the Cauchy--Riemann criterion for
isomonodromic Higgs deformations; see Hu--Sun--Yang--Zuo
\cite[Proposition~3.5]{HSYZ}. 

\begin{lem}
\label{lem:app-obstruction-splitting}
Let $v\in T_u^{1,0}U$, and let $[\eta_v]\in H^1(X_u,T_{X_u})$ be the
Kodaira--Spencer class in the direction $v$.  Then
\begin{equation}\label{eq:app-obstruction-projection}
(\mathsf p_i)_*\bigl(\theta_{u,*}([\eta_v])\bigr)
=
\theta_{i,u,*}([\eta_v])
\end{equation}
for every $i$.  More precisely,
\begin{equation}\label{eq:app-obstruction-directsum}
\theta_{u,*}([\eta_v])
=
\sum_{i=1}^k
(\jmath_i)_*\bigl(\theta_{i,u,*}([\eta_v])\bigr).
\end{equation}
In particular,
\[
\theta_{u,*}([\eta_v])=0
\quad\Longleftrightarrow\quad
\theta_{i,u,*}([\eta_v])=0
\ \text{for every }i.
\]
\end{lem}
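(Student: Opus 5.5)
The plan is to compute everything in the Dolbeault model of the deformation complexes and use that the map $(T_{X_u},0)\to C_u^\bullet$ factors through the block decomposition. By \eqref{eq:app-theta-obstruction}, $\theta_{u,*}([\eta_v])$ is represented by the cocycle $(0,\eta_v\lrcorner\theta_u)$ in the total Dolbeault complex $\mathcal A^{0,\bullet}(C_u^\bullet)$, and similarly $\theta_{i,u,*}([\eta_v])$ is represented by $(0,\eta_v\lrcorner\theta_{i,u})$. The decomposition \eqref{eq_polys_decomp} gives $\theta_u=\bigoplus_i\theta_{i,u}\otimes\operatorname{id}_{M_i}$, which is block diagonal and lies entirely in the diagonal $(i,i)$-blocks. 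Contraction with $\eta_v$ acts only on the form part, so
\[
\eta_v\lrcorner\theta_u=\sum_{i=1}^k(\eta_v\lrcorner\theta_{i,u})\otimes\operatorname{id}_{M_i}=\sum_{i=1}^k\jmath_i\bigl(\eta_v\lrcorner\theta_{i,u}\bigr)
\]
at the level of forms.

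Next, extend $\jmath_i$ and $\mathsf p_i$ of Lemma~\ref{lem:app-isotypic-projection} to the Dolbeault resolutions, acting as the identity on the $\mathcal A^{0,q}$ factor. I must check that they commute with $\bar\partial$ as well as with $\operatorname{ad}(\theta_u)$. This holds because the splitting $E_u=\bigoplus_i E_{i,u}\otimes M_i$ is holomorphic, with $M_i$ constant vector spaces. Hence $\bar\partial_{\operatorname{End}E_u}$ preserves the blocks, and on the $(i,i)$-block it equals $\bar\partial_{\operatorname{End}E_{i,u}}\otimes\operatorname{id}$, which commutes with $\operatorname{id}\otimes\frac1{m_i}\operatorname{tr}_{M_i}$. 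The maps are therefore morphisms of total complexes and induce the maps on $\mathbb H^1$. Applying them to the displayed identity of cocycles gives \eqref{eq:app-obstruction-directsum} on classes, and then applying $(\mathsf p_i)_*$ together with $\mathsf p_i\circ\jmath_j=\delta_{ij}\operatorname{id}$ gives \eqref{eq:app-obstruction-projection}.

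For the final equivalence, suppose every $\theta_{i,u,*}([\eta_v])$ vanishes; then the sum in \eqref{eq:app-obstruction-directsum} vanishes. Conversely, if $\theta_{u,*}([\eta_v])=0$, then \eqref{eq:app-obstruction-projection} shows that each $\theta_{i,u,*}([\eta_v])=(\mathsf p_i)_*(0)=0$.

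The only point that needs care is the compatibility with $\bar\partial$, that is, that the decomposition is holomorphic on each fiber. This follows from the fiberwise polystable decomposition supplied by non-abelian Hodge theory, so I expect no real obstacle; the rest is bookkeeping with the block structure.
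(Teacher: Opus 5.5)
Your proposal is correct and follows essentially the paper's route. Like the paper, you write $\eta_v\lrcorner\theta_u=\sum_i\jmath_i(\eta_v\lrcorner\theta_{i,u})$ blockwise, push this through the chain maps $\jmath_i,\mathsf p_i$, and use $\mathsf p_i\circ\jmath_j=\delta_{ij}\operatorname{id}$. Your explicit check that these maps commute with $\bar\partial$ on the Dolbeault resolution, because the splitting is holomorphic on each fiber, fills in a step the paper leaves implicit.
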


\begin{proof}
By the Higgs decomposition \eqref{eq_polys_decomp}, we have
\[
\eta_v\lrcorner\theta_u
=
\bigoplus_i
(\eta_v\lrcorner\theta_{i,u})\otimes\operatorname{id}_{M_i}.
\]
Applying $\mathsf p_i$ and using the normalized trace gives
\[
\mathsf p_i(\eta_v\lrcorner\theta_u)
=
\eta_v\lrcorner\theta_{i,u}.
\]
Together with \eqref{eq:app-theta-obstruction}, this proves
\eqref{eq:app-obstruction-projection}; the direct-sum identity
\eqref{eq:app-obstruction-directsum} follows from the explicit block
expression.  The last assertion follows because the maps $\mathsf p_i$ split
the diagonal inclusions $\jmath_i$.
\end{proof}

This Lemma~\ref{lem:app-obstruction-splitting} and the first order holomorphicity criterion prove the following proposition.

\begin{prop}
\label{prop:app-polystable-reduction}
In the situation of \eqref{eq_semisimple_decomp}--\eqref{eq_polys_decomp},
the following conditions are equivalent:
\begin{enumerate}
\item the real analytic isomonodromic Higgs family $(E_U,\theta_U)$ is
holomorphic;
\item for every $i$, the stable isomonodromic Higgs family
$(E_{i,U},\theta_{i,U})$ is holomorphic;
\item for every $u\in U$, every $v\in T_u^{1,0}U$, and every $i$, one has $\theta_{i,u,*}([\eta_v])=0.$
\end{enumerate}
\end{prop}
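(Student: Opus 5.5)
We prove the cycle of implications (1)$\Rightarrow$(3)$\Rightarrow$(2)$\Rightarrow$(1), using the first-order holomorphicity criterion of Theorem~\ref{thm:first-order-normalization} at every point $u\in U$ together with Lemma~\ref{lem:app-obstruction-splitting}.

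For (1)$\Rightarrow$(3), fix $u\in U$ and $v\in T^{1,0}_uU$, and choose a first-order germ $\gamma:\operatorname{Spec}A_1\to U$ tangent to $v$. If $(E_U,\theta_U)$ is holomorphic, then $\sigma_{\mathrm{Dol}}\circ\gamma$ is holomorphic on $\operatorname{Spec}A_1$. Applying Theorem~\ref{thm:first-order-normalization} with base point $u$ in place of $0$ gives $\theta_{u,*}([\eta_v])=0$. Lemma~\ref{lem:app-obstruction-splitting} then gives $\theta_{i,u,*}([\eta_v])=0$ for every $i$.

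For (3)$\Rightarrow$(2), fix $i$. Each stable-factor family $(E_{i,U},\theta_{i,U})$ is itself the non-abelian Hodge image of the irreducible isomonodromic family $(V_i,\nabla_i)$, so it is a real analytic section $\sigma_i$ of $M_{\mathrm{Dol}}(X_U/U)$ of the form in Setup~\ref{setup:isomonodromic}. By Theorem~\ref{thm:first-order-normalization} applied at each $u$, the hypothesis $\theta_{i,u,*}([\eta_v])=0$ for all $v$ says that $\bar\partial\sigma_i$ vanishes at $u$ in every $(1,0)$-direction. Equivalently, $d\sigma_i$ is $\mathbb C$-linear at every point of $U$ (or of $U_{\mathrm{reg}}$, with $U$ handled via its normalization if singular). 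A real analytic map whose differential is everywhere complex linear is holomorphic, so $\sigma_i$ is holomorphic.

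\textbf{Main obstacle.} The delicate point is the claim that the vanishing of the non-abelian Kodaira--Spencer class characterizes the vanishing of $\bar\partial\sigma_i$ at $u$, and not merely first-order holomorphicity along a germ. I would argue it as follows: Theorem~\ref{thm:first-order-normalization} identifies holomorphicity on $\operatorname{Spec}A_1$ with the vanishing of the $\bar t$-coefficient of the induced first-order family in the Higgs deformation space $\mathbb H^1$, which is exactly the $(0,1)$-part of $d\sigma_i$ in the direction $v$. If $U$ is singular, I would work on the smooth locus and extend by continuity and the identity theorem.

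For (2)$\Rightarrow$(1), the multiplicity spaces $M_i$ are constant because $U$ is simply connected and the decomposition \eqref{eq_semisimple_decomp} is fixed. Hence $(E_U,\theta_U)=\bigoplus_i(E_{i,U},\theta_{i,U})\otimes M_i$ is a direct sum of holomorphic families tensored with trivial bundles, so it is holomorphic. Alternatively, (3)$\Rightarrow$(1) follows directly from \eqref{eq:app-obstruction-directsum} and the same pointwise criterion applied to $\sigma_{\mathrm{Dol}}$.
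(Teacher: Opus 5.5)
Your proof is correct and follows the paper's route. The paper's proof consists only of the remark that Lemma~\ref{lem:app-obstruction-splitting}, together with the first-order criterion of Theorem~\ref{thm:first-order-normalization} applied at every point of $U$, gives the equivalences; your argument spells out exactly this, including the standard step that a real analytic section with $\bar\partial\sigma_i=0$ at every point is holomorphic.

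Your aside on singular $U$ is not needed, because the proposition is only used for smooth bases (a contractible Stein manifold and a disc). As stated it is also loose: continuity plus holomorphicity on $U_{\mathrm{reg}}$ does not give holomorphicity on a non-normal $U$.
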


\thispagestyle{empty}

\end{document}